\documentclass[makeidx]{article}
\title{Eigenvalue bounds for the magnetic Laplacian}
\author{Bruno Colbois and Alessandro Savo}
\date{\today}
\usepackage{makeidx}
\usepackage{amssymb , amsmath, amsthm}
\usepackage{graphicx}
\oddsidemargin 0cm
\evensidemargin 0cm
\parindent0cm
\textwidth 16cm
\newtheorem{defi}{Definition} 
\newtheorem{thm}[defi]{Theorem}
\newtheorem{ex}[defi]{Example}
\newtheorem{rem}[defi]{Remark}
 \newtheorem{prop}[defi]{Proposition}
\newtheorem{lemme}[defi]{Lemma}

\newcommand{\twosystem}[2]{\left\{\begin{aligned} &#1\\ &#2\end{aligned}\right.}
\newcommand{\threesystem}[3]{\left\{ \begin{aligned}&#1\\ &#2\\&#3\end{aligned}\right.}

\newcommand{\nero}{\smallskip$\bullet\quad$\rm}

\newcommand{\due}[2]{#1&#2\\}

\newcommand{\matrice}{\begin{pmatrix}}
\newcommand{\ok}{\end{pmatrix}}
\newcommand{\twomatrix}[4]{\matrice\due {#1}{#2}\due{#3}{#4}\ok}

\newcommand{\derive}[2]{\dfrac{\bd #1}{\bd#2}}

\newcommand{\opd}[1]{\dfrac{\bd}{\bd #1}}

\newcommand{\Due}[2]{\begin{pmatrix}#1\\#2\end{pmatrix}}

\newcommand{\scal}[2]{\langle{#1},{#2}\rangle}

\newcommand{\abs}[1]{\lvert{#1}\rvert}
\newcommand{\norm}[1]{\lVert{#1}\rVert}

\newcommand{\reals}{{\bf R}}
\newcommand{\sphere}[1]{{\bf S}^{#1}}
\newcommand{\real}[1]{{\bf R}^{#1}}

\newcommand{\bd}{\partial}

\renewcommand{\l}{\lambda}

\begin{document}

\maketitle
\begin{abstract} 
 We consider a compact Riemannian manifold $M$ endowed with a potential $1$-form $A$ and study the magnetic Laplacian $\Delta_A$ associated with those data (with Neumann magnetic boundary condition if $\partial M \not = \emptyset$). We first establish a family of upper bounds for all the eigenvalues, compatible with the Weyl law. When the potential is  a closed $1$-form, we get a sharp upper bound for the first eigenvalue. In the second part, we consider only closed potentials, and we establish a sharp lower bound for the first eigenvalue when the manifold is a 2-dimensional Riemannian cylinder. The equality case characterizes the situation where the metric is a product. We also look at the case of doubly convex domains in the Euclidean plane.

\medskip

\noindent \it 2000 Mathematics Subject Classification. \rm 58J50, 35P15.

\noindent\it Key words and phrases. \rm Magnetic Laplacian, Eigenvalues, Upper and lower bounds, Zero magnetic field

\end{abstract}
\large

\section{Introduction} \label{intro} Let $(M,g)$ be a complete Riemannian manifold and let $\Omega$ be a compact domain of $M$ with smooth boundary $\bd\Omega$,  if non empty (if $M$ is closed, we can choose $\Omega=M$ and $\partial \Omega=\emptyset$).
Consider the trivial complex line bundle $\Omega\times\bf C$ over $\Omega$; its space of sections  can be identified with $C^{\infty}(\Omega,\bf C)$, the space of smooth complex valued functions on $\Omega$.  Given a smooth real 1-form $A$ on $\Omega$ we define a connection $\nabla^A$ on $C^{\infty}(\Omega,\bf C)$ as follows:
\begin{equation} \label{connection}
\nabla^A_Xu=\nabla_Xu-iA(X)u
\end{equation}
for all vector fields $X$ on $\Omega$ and for all $u\in C^{\infty}(\Omega,\bf C)$. The operator
\begin{equation} \label{magnetic laplacian}
\Delta_A=(\nabla^A)^{\star}\nabla ^A
\end{equation}
is called the {\it magnetic Laplacian} associated to the magnetic potential $A$, and the smooth two form 
$$
B=dA
$$
is the  associated {\it  magnetic field}. If the boundary of $\Omega$ is non empty, we will consider Neumann magnetic conditions, that is:
\begin{equation}\label{mneumann}
\nabla^A_Nu=0\quad\text{on}\quad\bd\Omega,
\end{equation}
where $N$ denotes the inner unit normal. 
Then, it is well-known that $\Delta_A$ is self-adjoint, and admits a discrete spectrum
$$
0\le \l_1(\Delta_A)\le \l_2(\Delta_A) \le ... \to \infty.
$$

The above  is a particular case of a more general situation, where $E\to M$ is a complex line bundle with a hermitian connection $\nabla^E$, and where the magnetic Laplacian is defined as $\Delta_E=(\nabla^E)^{\star}\nabla ^E$.

\smallskip

The spectrum of the magnetic Laplacian is very much studied in analysis (see for example [BDP] and the references therein) and in relation with physics; lower estimates of its fundamental tone have been worked out, in particular, when $\Omega$ is a planar domain and $B$ is the constant magnetic field; that is, when the function $\star B$ is constant on $\Omega$ (see for example a Faber-Krahn type inequality in [Er1] and the recent[LS] and the references therein). The case when the potential $A$ is a closed $1$-form is particularly interesting from  the physical point of view (Aharonov-Bohm  effect), and also from the geometric point of view: this situation corresponds to {\it zero} magnetic field.
We refer in particular to the paper [HHHO], where the authors study the multiplicity and the nodal sets corresponding to the ground state $\l_1$ for non-simply connected planar domains with harmonic potential and Neumann boundary condition. For holed plane domains and Dirichlet boundary condition, there is a serie of papers for domains with a pole, when the pole approaches the boundary (see [AFNN], [NNT] and the references therein). Last but not least, there is a Aharonov-Bohm approach to the question of  nodal and minimal partitions, see chapter 8 of [BH].

\smallskip
In this paper, we are interested in  a more geometric approach to the spectrum of $\Delta_A$, in the same spirit of what is done for the usual Laplacian and Laplace type operators. In this context, let us mention the recent article [LLPP] (chapter 7) where the authors establish a \emph{Cheeger type inequality} for $\l_1$; that is, they find a lower bound for $\l_1(\Delta_A)$ in terms of the geometry of $\Omega$ and the potential $A$. In the preprint [ELMP], the authors approach the problem via the Bochner method.

In a more general context, in [BBC], the authors establish a lower bound for $\l_1(\Delta_A)$ in terms of the \emph{holonomy} of the vector bundle on which $\Delta_A$ acts. In both cases, implicitly, the harmonic part of the potential $A$ (or, more precisely, the distance between the harmonic part of $A$ and the lattice of integral 1-harmonic forms) plays a crucial role.
 
\smallskip
Our goal in this article is, firstly,  to give a very general family of \emph{upper bounds} for the whole spectrum $\{\lambda_k(\Delta_A)\}$ of $\Delta_A$ in term of the geometry of $\Omega$ and  the potential $A$: see Theorem \ref{main1} below. We stress that our estimates are optimal, in the sense that they are compatible with the Weyl law. Moreover, these estimates show how the geometry of the manifold and the data coming from the potential interplay.

\smallskip
In the case when $A$ is closed  (i.e. zero magnetic field), we establish a sharp upper bound for the first eigenvalue $\lambda_1(\Delta_A)$ in term of the distance of $A$ to the an integral  lattice of harmonic $1$-forms (see Proposition \ref{harmonic potential}). We also get an upper bound by comparison of the spectrum of $\Delta_A$ with the spectrum of the Laplacian on domains $D \subset \Omega$ with mixed Dirichlet/Neumann boundary conditions.

\smallskip
In the second part, inspired by the paper [HHHO], we continue to study the particular the case of a closed potential, where it is possible to get \emph{lower bounds} for $\lambda_1(\Delta_A)$ in the specific but fundamental situation where $\Omega$ is a Riemannian cylinder of dimension $2$. We establish a sharp lower bound in terms of the geometry of $\Omega$ and the \emph{flux} of $A$, and characterize the equality case: the cylinder has to be a Riemannian product (see Theorem \ref{main3}).
We also study the situation of a convex subset of $\bf R^2$ with a convex hole. Here, we establish a lower bound in terms of the geometry of this doubly convex set and the flux of $A$ (see Theorem \ref{main2} below).  

\medskip

Here is the scheme of this introduction: in Section \ref{preliminary} we  recall some classical facts about the magnetic Laplacian and the Hodge decomposition and,  in Section \ref{zero}, we discuss zero magnetic field and state  Shikegawa's equivalent conditions for the vanishing of the first eigenvalue. In Section \ref{upperbounds} we state our main upper bounds and in Section \ref{lowerbounds} we discuss the lower bounds, first for general Riemannian cylinders and then for doubly connected domains.


\subsection{Preliminary facts and notation} \label{preliminary} First, we recall the variational definition of the spectrum. Let $\Omega$ be a a closed manifold, or a compact domain with smooth boundary in a complete Riemannian manifold $M$. If the boundary is non empty, we assume for $u\in C^{\infty}(\Omega,\bf C)$ the magnetic Neumann conditions, as in \eqref{mneumann}. Then one verifies that
$$
\int_{\Omega}(\Delta_Au)\bar u=\int_{\Omega}\abs{\nabla^Au}^2,
$$
and the associated quadratic form is then
$$
Q_A(u)=\int_{\Omega}\abs{\nabla^Au}^2.
$$
Equivalently, if one defines the modified differential $d^A$ as $d^Au=du-iuA$, then 
$$
Q_A(u)=\int_{\Omega}\abs{d^Au}^2.
$$
This means that the spectrum of $\Delta_A$ admits the usual variational characterization:

\begin{equation}
\lambda_1(\Delta_A)= \min\Big\{ \frac{Q_A(u)}{\Vert u\Vert^2}:\ u\in C^{1}(\Omega,\mathbb C) / \{0\}\Big\}
\end{equation}
and
\begin{equation}
\lambda_k(\Delta_A)= \min_{E_k} \max \Big\{ \frac{Q_A(u)}{\Vert u\Vert^2}:\ u\in E_k  / \{0\}\Big\}
\end{equation}
where $E_k$ runs through the set of all $k$-dimensional vector subspaces of $C^{1}(\Omega,\mathbb C)$. Note that $\lambda_1(\Delta_A)\geq 0$, and we will discuss conditions under which $\lambda_1$ is positive in Theorem \ref{shikegawa} below. 

\smallskip

\nero {\it We will also use the notation
$$
\lambda_k(\Omega,A)\doteq\lambda_k(\Delta_A)
$$
when we want to stress the manifold $\Omega$ on which the Laplacian is acting. }

\medskip
The following proposition recalls some well-known facts. We say that a differential form $A$ is {\it tangential} if $i_NA=0$ on $\bd\Omega$.

\begin{prop}  \label{basic facts}
\begin{enumerate}
\item
The spectrum of $\Delta_A$ is equal to the spectrum of $\Delta_{A+d\phi}$ for all smooth real valued functions $\phi$; in particular, when $A$ is exact, the spectrum of $\Delta_A$ reduces to that of the classical Laplace-Beltrami operator acting on functions (with Neumann boundary conditions if $\bd\Omega$ is not empty).

\item 
 Let $A$ be $1-$form on $\Omega$. Then, there exists a smooth real valued function $\phi$ on $\Omega$ such that the $1$-form $\tilde A=A+d\phi$ is co-closed and tangential, that is:
\begin{equation} \label{boundary conditions}
\delta\tilde A=0, \,\, i_N\tilde A=0.
\end{equation}

\item 
 Set:
$$
{\rm Har}_1(\Omega)=\Big\{h\in\Lambda^1(\Omega): dh=\delta h=0 \,\,\text{on $\Omega$},\,\, i_Nh=0\,\text{on}\,\,\bd\Omega\Big\}.
$$
Assume that the $1$-form $A$ is co-closed and tangential. Then $A$ can be decomposed
\begin{equation} \label{decomposition}
A=\delta\psi+h,
\end{equation}
where $\psi$ is a smooth tangential $2$-form and $h\in{\rm Har}_1(\Omega)$. Note that the vector space ${\rm Har}_1(\Omega)$ is isomorphic to the first de Rham absolute cohomology space $H^1(\Omega,\reals)$. 

\end{enumerate}
\end{prop}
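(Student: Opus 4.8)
The plan is to treat the three assertions by standard tools: gauge invariance via a unitary multiplication operator for (1), solvability of a Neumann problem for (2), and Hodge theory on manifolds with boundary for (3). For (1), I would verify that multiplication by $e^{i\phi}$ is a unitary operator of $L^2(\Omega,\mathbb C)$ conjugating $d^A$ into $d^{A+d\phi}$. From $d(e^{i\phi}u)=e^{i\phi}(du+iu\,d\phi)$ and the definition $d^Bu=du-iuB$ one gets the pointwise identity $d^{A+d\phi}(e^{i\phi}u)=e^{i\phi}d^Au$, hence $Q_{A+d\phi}(e^{i\phi}u)=Q_A(u)$ and $\norm{e^{i\phi}u}=\norm u$. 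The same computation gives $\nabla^{A+d\phi}_N(e^{i\phi}u)=e^{i\phi}\nabla^A_Nu$, so the magnetic Neumann condition is preserved. Since $u\mapsto e^{i\phi}u$ is a linear bijection of $C^1(\Omega,\mathbb C)$ preserving every Rayleigh quotient, the min--max formulas yield $\l_k(\Delta_A)=\l_k(\Delta_{A+d\phi})$ for all $k$; and when $A=df$, the choice $\phi=-f$ gives $A+d\phi=0$, so $\Delta_0$ is the usual Laplace--Beltrami operator (Neumann if $\bd\Omega\ne\emptyset$).

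For (2), using $\Delta\phi=\delta d\phi$ on functions and $i_Nd\phi=N(\phi)$, the requirements $\delta(A+d\phi)=0$ and $i_N(A+d\phi)=0$ become the Neumann problem
$$
\Delta\phi=-\delta A\ \text{in }\Omega,\qquad N(\phi)=-i_NA\ \text{on }\bd\Omega .
$$
Its compatibility condition $\int_\Omega\delta A=\int_{\bd\Omega}i_NA$ is exactly Stokes' theorem for $\star A$ (equivalently the divergence theorem applied to $A^\sharp$), hence holds automatically; elliptic theory then provides a smooth $\phi$, unique up to an additive constant.

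For (3), assume $A$ co-closed and tangential. First I would observe that $A$ is $L^2$-orthogonal to every exact $1$-form, since Green's formula gives $\int_\Omega\langle A,df\rangle=\int_\Omega f\,\delta A-\int_{\bd\Omega}f\,i_NA=0$. Let $h$ be the $L^2$-projection of $A$ onto the finite-dimensional space $\mathrm{Har}_1(\Omega)$, and put $\omega=A-h$; then $\omega$ is co-closed, tangential, and orthogonal to $\mathrm{Har}_1(\Omega)$. As $\mathrm{Har}_1(\Omega)$ is the kernel of the Hodge Laplacian on $1$-forms with absolute boundary conditions $i_N\eta=0,\ i_Nd\eta=0$, there is a smooth such $\eta$ with $\Delta\eta=\omega$, that is $\omega=d\delta\eta+\delta d\eta$. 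Now $d\delta\eta$ is exact while $\delta d\eta$ is $L^2$-orthogonal to all exact $1$-forms (Green's formula together with $i_Nd\eta=0$); pairing $\omega=d\delta\eta+\delta d\eta$ with $d\delta\eta$ and using $\omega\perp df$ forces $\normsq{d\delta\eta}=0$. Hence $\omega=\delta(d\eta)$ and $\psi:=d\eta$ is a smooth tangential $2$-form, so $A=\delta\psi+h$. Finally $\mathrm{Har}_1(\Omega)\cong H^1(\Omega,\reals)$ by the de Rham--Hodge theorem for the absolute cohomology of a manifold with boundary.

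Parts (1) and (2) are routine once the sign conventions for $\delta$, $i_N$ and the inner normal are fixed. The main obstacle is the correct use of Hodge theory for manifolds with boundary in (3): the solvability of $\Delta\eta=\omega$ under the absolute boundary conditions, regularity up to $\bd\Omega$, and the vanishing of the boundary integrals in Green's formula under those conditions; for this I would invoke the classical Hodge--Morrey--Friedrichs decomposition.
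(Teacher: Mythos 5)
Your parts (1) and (2) follow the paper's argument: gauge equivalence via multiplication by $e^{i\phi}$, and solvability of the Neumann problem $\Delta\phi=-\delta A$, $\partial\phi/\partial N=-A(N)$. A small plus is that you spell out the compatibility condition $\int_\Omega\delta A=\int_{\partial\Omega}i_NA$ and observe that it is automatic by the divergence theorem, a step the paper takes for granted.

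For part (3) you take a genuinely different route. The paper simply invokes the Hodge--Morrey--Friedrichs decomposition from Schwarz's book, writing $A=df+\delta\psi+h$ with $f$ vanishing on $\bd\Omega$, $\psi$ tangential and $h$ a harmonic field, and then kills the exact term by observing that $\delta A=0$ forces $f$ to be a harmonic function with zero boundary values, hence $f\equiv0$; the tangentiality of $h$ then follows because both $A$ and $\delta\psi$ are tangential. You instead prove the decomposition from scratch: orthogonality of a co-closed tangential form to all exact $1$-forms, $L^2$-projection onto $\mathrm{Har}_1(\Omega)$, solvability of $\Delta\eta=\omega$ under absolute boundary conditions with $\omega\perp\mathrm{Har}_1(\Omega)$, and an integration-by-parts argument to show the exact piece $d\delta\eta$ vanishes. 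Both arguments are correct, and yours has the merit of revealing exactly which ingredients of Hodge theory are used (elliptic solvability modulo the kernel and the two Green's-formula orthogonalities), at the cost of being longer than the paper's one-line reduction to the cited theorem; you are also correct that $\psi:=d\eta$ is tangential precisely because of the absolute boundary condition $i_Nd\eta=0$.
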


\nero {\rm Assertion (1) expresses the well-known {\it Gauge invariance} of the spectrum. Thanks to Assertion (2), in the study of the spectrum of the magnetic Laplacian, we can always assume that the potential $A$ is co-closed and tangential.}  

\begin{proof} 
\begin{enumerate}

\item 
 This comes from the fact that
\begin{equation}\label{gauge}
\Delta_A e^{-i\phi}=e^{-i\phi} \Delta_{A+d\phi}
\end{equation}
hence $\Delta_A$ and $\Delta_{A+d\phi}$ are unitarily equivalent.

\item 
Observe that the problem:
$$
\twosystem
{\Delta\phi=-\delta A \quad\text{on}\quad \Omega,}
{\frac{\partial\phi}{\partial N}=-A(N)\quad\text{on}\quad\partial \Omega}
$$
has a unique solution (modulo a multiplicative constant). It is immediate to verify that $\tilde A=A+d\phi$ is indeed co-closed and tangential.

\item
 We apply the Hodge decomposition to the $1$-form $A$ (see [Sc], Thm. 2.4.2), and get:
\begin{equation}
A=df + \delta \psi +h,
\end{equation}
where $f$ is a function which is zero on the boundary, $\psi$ a tangential 2-form and $h$ is a 1-form satisfying $dh=\delta h=0$ (in particular, $h$ is harmonic). Now, as $\delta A=0$ we obtain $\delta df=0$ hence $f$ is a harmonic function; since $f$ is zero on the boundary, we get $f=0$ also on $\Omega$ and we can write
\begin{equation}
A=\delta \psi +h.
\end{equation}
Now, since both $A$ and $\delta\psi$ are tangential, also $h$ will be tangential. 
\end{enumerate}
\end{proof}

\nero {\it In the sequel, when we write the decomposition $A=\delta \psi +h$, it will be implicitely supposed that $\psi$ a tangential 2-form and $h$ is a 1-form satisfying $dh=\delta h=0$ and $i_Nh=0$.}


\subsection{Zero magnetic field}\label{zero} We now focus on the first eigenvalue. 
Clearly, if $A=0$,  then $\lambda_1(\Delta_A)=0$ simply because $\Delta_A$ reduces to the usual Laplacian, which has first eigenvalue equal to zero and first eigenspace spanned by the constant functions. If $A$ is exact, then $\Delta_{A}$ is unitarily equivalent to $\Delta$, hence, again, $\lambda_1(\Delta_A)=0$. In fact one checks easily from the definition of the connection  that, if $A=d\phi$ for some real-valued function $\phi$ then:
$$
\nabla^{A}e^{i\phi}=0,
$$
that is, $u=e^{i\phi}$ is $\nabla^A$-parallel hence $\Delta_A$-harmonic. 

On the other hand, if the magnetic field $B=dA$ is non-zero then $\lambda_1(\Delta_A)>0$ (see Theorem \ref{shikegawa} below).  It then remains to examine the case when $A$ is closed but not exact. 
The situation was clarified by Shikegawa in [Sh]. First, let us recall that, if $A$ is a $1$-form and $c$ is a closed curve (a loop), the quantity:
$$
\Phi^A_c=\dfrac{1}{2\pi}\oint_{c}A
$$
is called the {\it flux} of $A$ across $c$. 

\nero {\it We will not  specify the orientation of the loop, so that the flux will only be defined up to sign. This will not affect any of the statements, definitions or results which we will prove in this paper.} 

\smallskip

Then we have (see [Sh]):

\begin{thm}\label{shikegawa}The following statements are equivalent:

\item a)
$\lambda_1(\Delta_A)=0$;

\item b)
$dA=0$ and $\Phi^A_c\in\bf Z$ for any closed curve $c$ in $\Omega$.
\end{thm}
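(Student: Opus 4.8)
The plan is to prove the two implications separately, working with the quadratic form $Q_A$ and exploiting the gauge invariance from Proposition \ref{basic facts}.

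\textbf{From (b) to (a).} Assume $dA=0$ and $\Phi^A_c\in\bf Z$ for every closed loop $c$. The idea is to produce an explicit nowhere-vanishing $\nabla^A$-parallel section $u$, which then satisfies $Q_A(u)=0$ and hence forces $\lambda_1(\Delta_A)=0$. Fix a basepoint $x_0\in\Omega$ and for $x\in\Omega$ set $\phi(x)=\int_{\gamma}A$, where $\gamma$ is any path from $x_0$ to $x$. Since $dA=0$, two such paths differ by a closed loop, and the integrality of the fluxes means the value of $\int_\gamma A$ is well defined modulo $2\pi\bf Z$; therefore $u(x)=e^{i\phi(x)}$ is a well-defined smooth function on $\Omega$. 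A direct computation as in Section \ref{zero} gives $\nabla^A u = \nabla u - iA u = i\,d\phi\cdot u - iA\,u = 0$ since $d\phi = A$ locally, and also $\nabla^A_N u = 0$ on $\partial\Omega$, so $u$ is admissible for the Neumann problem. Hence $Q_A(u)=\int_\Omega|\nabla^A u|^2 = 0$ with $u\not\equiv 0$, giving $\lambda_1(\Delta_A)=0$.

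\textbf{From (a) to (b).} Assume $\lambda_1(\Delta_A)=0$, so there is $u\in C^1(\Omega,\bf C)$, $u\not\equiv 0$, with $Q_A(u)=\int_\Omega|d^Au|^2=0$, i.e. $du=iuA$ everywhere (and $\nabla^A_Nu=0$ on the boundary). First one shows $|u|$ is constant: writing $u=\rho e^{i\theta}$ locally where $\rho>0$, the equation $du = iuA$ gives $d\rho + i\rho\,d\theta = i\rho\, A$, and separating real and imaginary parts yields $d\rho=0$ and $d\theta = A$; since $\Omega$ is connected, $\rho\equiv\const\neq 0$, so $u$ never vanishes and we may normalize $|u|\equiv 1$. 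Then locally $u=e^{i\theta}$ with $d\theta=A$, so $A$ is closed ($dA=d(d\theta)=0$), establishing the first half of (b). For the flux condition: given a closed loop $c$, parametrize it and follow a continuous branch of $\theta$ along $c$; since $u=e^{i\theta}$ is single-valued on the loop, the total increment of $\theta$ around $c$ is an integer multiple of $2\pi$, i.e. $\oint_c A = \oint_c d\theta \in 2\pi\bf Z$, which is exactly $\Phi^A_c\in\bf Z$.

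\textbf{Main obstacle.} The routine part is the local computation $du=iuA$; the point requiring care is the global well-definedness of the phase. In the direction (b)$\Rightarrow$(a) this is the monodromy argument showing $e^{i\phi}$ descends to $\Omega$ despite $\phi$ itself being multivalued — one must check that the set of periods $\{\int_c A : c \text{ a loop}\}$ is contained in $2\pi\bf Z$, which is precisely the hypothesis, and invoke connectedness and simple-connectedness of small charts. In the direction (a)$\Rightarrow$(b) the subtlety is first ruling out zeros of $u$ (done via $d\rho=0$) so that the local logarithm $\theta$ exists; once $u$ is nowhere zero the argument principle / lifting of $u/|u|:\Omega\to S^1$ to the universal cover handles the flux quantization cleanly. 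Neither step is deep, but both hinge on the interplay between the local exactness of the closed form $A$ and the integrality of its periods, so I would present that interplay carefully rather than as a one-line assertion.
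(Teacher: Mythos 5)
Your proof is correct, and it is the standard argument. The paper itself does not prove Theorem \ref{shikegawa}; it cites Shigekawa [Sh]. However, your direction (b)$\Rightarrow$(a) is exactly the construction that the paper reuses in Step 1 of the proof of Proposition \ref{harmonic potential} (building $u=e^{i\phi}$ with $\phi(x)=\int_{x_0}^x\omega$ and checking $d^\omega u=0$), so that half matches the paper's own methods verbatim.

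For (a)$\Rightarrow$(b), the logic is right, but one remark on the point you yourself flag as the subtle one: introducing local polar coordinates $u=\rho e^{i\theta}$ with $\rho>0$ already presupposes $u\neq 0$ on the neighbourhood, so the conclusion that $|u|$ is a nonzero constant on all of $\Omega$ then needs the open--closed connectedness argument to propagate from the open set $\{u\neq 0\}$. That does work, but it is slightly cleaner to bypass polar coordinates entirely: from $du=iuA$ and $A$ real one gets $d\bar u=-i\bar uA$, hence
$$
d\bigl(|u|^2\bigr)=\bar u\,du+u\,d\bar u = i|u|^2A - i|u|^2A = 0,
$$
so $|u|$ is constant on the connected domain $\Omega$ without any case analysis; since $u\not\equiv 0$, the constant is positive and $u$ is nowhere zero. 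From there your monodromy argument (lift $\theta$ along the loop $c$, single-valuedness of $u=e^{i\theta}$ forces $\oint_c d\theta\in 2\pi\mathbf Z$, and $d\theta=A$) is exactly right and gives both $dA=0$ and $\Phi^A_c\in\mathbf Z$.
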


Thus, the first eigenvalue vanishes if and only if $A$ is a closed form whose flux around every closed curve is an integer; equivalently, if $A$ has non-integral flux around at least one closed loop, then $\lambda_1(\Delta_A)>0$. 

Note that, by applying part (2) of Proposition \ref{basic facts} in the study of the spectrum, any closed potential $A$ can be supposed to be also co-closed and tangential, that is, also an element of ${\rm Har}_1(\Omega)$: of course, the magnetic field $B=dA$ vanishes identically in that case. 

\smallskip

In [HHHO] this situation is studied when $\Omega$ is a planar domain bounded by the outer curve $\Sigma_2$ and the inner curve $\Sigma_1$, and the potential is a harmonic $1$-form in ${\rm Har}_1(\Omega)$.  By analyzing the nodal set of a first eigenfunction, the authors proved that  $\lambda_1(\Delta_A)$ is maximized precisely when the flux of $A$ across $\Sigma_1$ (or, which is the same, across $\Sigma_2$) is congruent to $\frac 12$ modulo integers. The remarkable fact is that this property holds regardless of the geometry of the annulus. It is fair to say that the lower bounds we derive in the second part of the paper were inspired by the paper [HHHO], and in particular we will obtain an explicit lower bound of $\lambda_1(\Delta_A)$ for planar annuli bounded by two convex curves, in terms of the geometry of the annulus and in terms of the distance of the flux of $A$ to the lattice of integers (see (\ref{convex})).  This result is a corollary of a more general lower bound for Riemannian cylinders (see Section \ref{cylinder}). 

\smallskip

Before stating the results, let us define two related distances associated to the 1-form $A$ which will play an important role in our estimates (see \eqref{ldistance} and \eqref{dflux} below). This distances  depend on the choice of a basis for the degree 1 homology of $\Omega$, say $(c_1,\dots,c_m)$. 

\smallskip

Consider the dual basis $(A_1,...,A_m)$ of ${\rm Har}_1(\Omega)$, defined by:
$$
\frac{1}{2 \pi}\oint_{c_i} A_j=\delta_{ij}
$$
and the lattice
$$
{\cal L}_{\bf Z}=\{k_1A_1+\dots+k_mA_m: k_j\in\bf Z\}
$$
which is an abelian subgroup of ${\rm Har}_1(\Omega)$. Given $A\in {\rm Har}_1(\Omega)$, we define its minimum distance to the lattice ${\cal L}_{\bf Z}$ by the formula:
\begin{equation}\label{ldistance}
d(A,{\cal L}_{\bf Z})^2=\min \Big\{\norm{\omega-A}^2,\, \omega\in {\cal L}_{\bf Z}\Big\},
\end{equation}
where $\norm{\cdot}$ denotes the $L^2$-norm of $1$-forms in $\Omega$. 
We stress that this distance is taken in the  Euclidean space ${\rm Har}_1(\Omega)$ endowed with the $L^2$-scalar product of forms. 

\smallskip

On the other hand, given any closed $1$-form $A$, we can consider its {\it flux vector}  in $\real m$ defined by:
$$
\Phi^A=(\Phi^A_{c_1},\dots, \Phi^A_{c_m}),
$$
and denote by $d(\Phi^A,{\bf Z}^m)$ the Euclidean distance of $\Phi^A$ to the lattice ${\bf Z}^m$, that is:
\begin{equation}\label{dflux}
d(\Phi^A,{\bf Z}^m)^2=\min\Big\{\sum_{j=1}^m(\Phi^A_{c_j}-k_j)^2: (k_1,\dots,k_m)\in{\bf Z}^m\Big\}.
\end{equation}
Note that $d(\Phi^A,{\bf Z}^m)\leq \frac {\sqrt m}{2}$. We have the following relation between the two distances.

\begin{prop} Let $A\in{\rm Har}_1(\Omega)$  and let $(A_1,\dots,A_m)$ be the dual basis of a given homology basis $(c_1,\dots,c_m)$. Then
$$
d(A,{\cal L}_{\bf Z})^2\leq  (\norm{A_1}^2+\dots+\norm{A_m}^2)\cdot d(\Phi^A,{\bf Z}^m)^2.
$$
\end{prop}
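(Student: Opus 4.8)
The plan is to reduce everything to linear algebra in the finite-dimensional Euclidean space $({\rm Har}_1(\Omega),\norm{\cdot})$, using the key observation that the coordinates of $A$ in the dual basis $(A_1,\dots,A_m)$ are precisely the components of the flux vector $\Phi^A$. Indeed, since $A\in{\rm Har}_1(\Omega)$ we may write $A=\sum_{j=1}^m a_jA_j$ for real numbers $a_j$; integrating over the cycle $c_i$ and using the defining relation $\frac{1}{2\pi}\oint_{c_i}A_j=\delta_{ij}$ gives $a_i=\frac{1}{2\pi}\oint_{c_i}A=\Phi^A_{c_i}$, so that $A=\sum_{j=1}^m\Phi^A_{c_j}A_j$.

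Next I would choose $(k_1,\dots,k_m)\in{\bf Z}^m$ realizing the minimum in \eqref{dflux} and set $\omega=\sum_{j=1}^m k_jA_j$, which is an admissible element of the lattice ${\cal L}_{\bf Z}$. Then $A-\omega=\sum_{j=1}^m(\Phi^A_{c_j}-k_j)A_j$, and by the triangle inequality followed by the Cauchy--Schwarz inequality applied to the sequences $(\abs{\Phi^A_{c_j}-k_j})_j$ and $(\norm{A_j})_j$,
$$
\norm{A-\omega}\le\sum_{j=1}^m\abs{\Phi^A_{c_j}-k_j}\,\norm{A_j}\le\Big(\sum_{j=1}^m(\Phi^A_{c_j}-k_j)^2\Big)^{1/2}\Big(\sum_{j=1}^m\norm{A_j}^2\Big)^{1/2}.
$$
Squaring this estimate, bounding $d(A,{\cal L}_{\bf Z})^2\le\normsq{A-\omega}$ from \eqref{ldistance} since $\omega\in{\cal L}_{\bf Z}$, and recalling that $\sum_{j=1}^m(\Phi^A_{c_j}-k_j)^2=d(\Phi^A,{\bf Z}^m)^2$ by the choice of the $k_j$, we obtain exactly
$$
d(A,{\cal L}_{\bf Z})^2\le\Big(\sum_{j=1}^m\norm{A_j}^2\Big)\,d(\Phi^A,{\bf Z}^m)^2.
$$

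There is no real obstacle in this argument: the only two points requiring care are the identification of the coefficients of $A$ in the dual basis with the fluxes $\Phi^A_{c_j}$ (immediate from the duality relation) and the passage from the minimizing integer vector for the Euclidean distance on ${\bf Z}^m$ to a genuine element of the lattice ${\cal L}_{\bf Z}$ (which is legitimate precisely because ${\cal L}_{\bf Z}$ is defined as the set of \emph{integer} combinations of the $A_j$). One should also note that the inequality is in general strict, since the dual basis $(A_1,\dots,A_m)$ need not be $L^2$-orthogonal, so the two distances genuinely differ unless the homology basis is chosen to produce an orthogonal (and suitably normalized) dual basis.
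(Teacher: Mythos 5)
Your proof is correct and follows essentially the same route as the paper: identify the coefficients of $A$ in the dual basis with the fluxes $\Phi^A_{c_j}$, bound $\normsq{A-\omega}$ for $\omega\in{\cal L}_{\bf Z}$ by Cauchy--Schwarz against $\sum_j\normsq{A_j}$, and optimize over integer tuples. The only cosmetic difference is that the paper bounds $\norm{A-\omega}^2$ for an arbitrary integer tuple and then takes the infimum, whereas you select the minimizing tuple for $d(\Phi^A,{\bf Z}^m)$ at the outset; the content is identical.
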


\begin{proof} By definition, we can write
$$
A=\Phi^A_{c_1}A_1+\dots+\Phi^A_{c_m}A_m.
$$
Let $k_1,\dots,k_m$ be integers. Then:
$$
\begin{aligned}
\norm{A-(k_1A_1+\dots+k_mA_m)}^2&=\norm{(\Phi^A_{c_1}-k_1)A_1+\dots+(\Phi^A_{c_m}-k_m)A_m}^2\\
&\leq \Big((\Phi^A_{c_1}-k_1)^2+\dots+(\Phi^A_{c_m}-k_m)^2\Big)(\norm{A_1}^2+\dots+\norm{A_m}^2)
\end{aligned}
$$
We now take the infimum over all $(k_1,\dots,k_m)\in{\bf Z}^m$ and get the assertion. 
\end{proof}
Note that if the flux across any curve is an integer, then $d(\Phi^A,{\bf Z}^m)=0$ and  also $d(A,{\cal L}_{\bf Z})=0$. On the other hand, the positiveness of any of the two distances guarantees that the first eigenvalue is strictly positive.  
Thus, $d(A,{\cal L}_{\bf Z})$ and $d(\Phi^A,{\bf Z}^m)$ are meaningful invariants for quantitative bounds of $\lambda_1(\Delta_A)$.

\subsection{Upper bounds}\label{upperbounds}

\begin{prop} \label{harmonic potential} Let $A=\delta\psi+h$ be a potential as in (\ref{decomposition})
where $\psi$ is a smooth $2$-form and $h\in{\rm Har}_1(\Omega)$.
Fix a basis for the degree $1$ homology of $\Omega$, and let $(A_1,\dots,A_m)$ be the dual basis of ${\rm Har}_1(\Omega)$.  Let ${\cal L}_{\bf Z}$ be the integral lattice generated by $A_1,\dots,A_m$. Then:

\begin{enumerate}
\item
$$
\l_1(\Delta_A)\leq\frac{ d(h,{\cal L}_{\bf Z})^2}{\vert \Omega\vert}+\frac{\Vert B\Vert^2}{\lambda_{1,1}(\Omega)\vert \Omega\vert}
$$
where $\vert \Omega\vert$ denotes the volume of $\Omega$, $\lambda_{1,1}(\Omega)$ denotes the first eigenvalue of the Laplacian acting on co-exact $1$-forms (with absolute boundary condition if $\partial \Omega \not=\emptyset$), $B=dA$ is the curvature of the magnetic potential $A$ and $\Vert .\Vert$ denotes the $L^2$-norm on $\Omega$.

\item
When $B=0$, we get
$$
\l_1(\Delta_A)\leq\frac{ d(h,{\cal L}_{\bf Z})^2}{\vert \Omega\vert}.
$$
and this inequality is sharp. Equality holds for flat rectangular tori. 
\end{enumerate}
In the above bounds we set $d(h,{\cal L}_{\bf Z})=0$ whenever ${\rm Har}_1(\Omega)=0$.
\end{prop}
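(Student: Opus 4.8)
The plan is to test the Rayleigh quotient against a family of unit-modulus functions, one for each point $\omega$ of the lattice ${\cal L}_{\bf Z}$, and to evaluate the resulting energy using the $L^2$-orthogonality built into the Hodge decomposition $A=\delta\psi+h$. Fix $\omega\in{\cal L}_{\bf Z}$. Since $\frac1{2\pi}\oint_{c_i}A_j=\delta_{ij}$ and the loops $c_i$ generate the degree~$1$ homology, every loop integral $\oint_c\omega$ is an integer multiple of $2\pi$; hence the function $v_\omega(x)=\exp\big(i\int_{x_0}^{x}\omega\big)$ is well defined and smooth on $\Omega$, with $\abs{v_\omega}\equiv 1$ and $dv_\omega=iv_\omega\,\omega$. (When ${\rm Har}_1(\Omega)=0$ we take $\omega=0$ and $v_\omega\equiv 1$.) A direct computation gives
$$
d^Av_\omega=dv_\omega-iv_\omega A=iv_\omega\big(\omega-\delta\psi-h\big),
$$
so that $\abs{d^Av_\omega}^2=\abs{\delta\psi+(h-\omega)}^2$ pointwise. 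In the decomposition (\ref{decomposition}) the co-exact summand $\delta\psi$ is $L^2$-orthogonal to ${\rm Har}_1(\Omega)$, hence to $h-\omega$ since $\omega\in{\cal L}_{\bf Z}\subset{\rm Har}_1(\Omega)$; therefore
$$
Q_A(v_\omega)=\int_\Omega\abs{d^Av_\omega}^2=\normsq{\delta\psi}+\normsq{h-\omega},\qquad \normsq{v_\omega}=\vert\Omega\vert.
$$
Plugging $v_\omega$ into the variational characterization of $\l_1(\Delta_A)$ and minimizing over the lattice ${\cal L}_{\bf Z}$ (the minimum is attained, since $\norm{h-\omega}\to\infty$ as $\norm{\omega}\to\infty$), we obtain
$$
\l_1(\Delta_A)\le\frac{\normsq{\delta\psi}+d(h,{\cal L}_{\bf Z})^2}{\vert\Omega\vert}.
$$

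It remains to bound $\normsq{\delta\psi}$ by $\normsq{B}/\lambda_{1,1}(\Omega)$. Since $dh=0$ we have $B=dA=d\delta\psi$. Moreover $\delta\psi$ is a co-exact $1$-form lying in the form domain of the Hodge Laplacian with absolute boundary conditions: it is smooth, and $i_N\delta\psi=i_NA-i_Nh=0$ because both $A$ and $h$ are tangential. By the min-max principle on the space of co-exact $1$-forms, $\normsq{d\delta\psi}\ge\lambda_{1,1}(\Omega)\,\normsq{\delta\psi}$, i.e. $\normsq{\delta\psi}\le\normsq{B}/\lambda_{1,1}(\Omega)$. Substituting into the previous inequality yields assertion (1). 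For assertion (2), if $B=0$ then $\normsq{\delta\psi}=0$, so $\delta\psi\equiv 0$ and the bound becomes $\l_1(\Delta_A)\le d(h,{\cal L}_{\bf Z})^2/\vert\Omega\vert$.

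To see that (2) is sharp I would take a flat rectangular torus $\Omega=\reals^n/(a_1{\bf Z}\oplus\dots\oplus a_n{\bf Z})$, with the coordinate loops as homology basis, so that $A_j=\frac{2\pi}{a_j}dx_j$ and the $A_j$ are pairwise $L^2$-orthogonal with $\normsq{A_j}=\frac{4\pi^2}{a_j^2}\vert\Omega\vert$. Take $A=h=\sum_j t_jA_j$, which is closed ($B=0$). The characters $e^{i\langle\xi,x\rangle}$ with $\xi_j\in\frac{2\pi}{a_j}{\bf Z}$ form a complete orthogonal set of eigenfunctions of $\Delta_A$, with eigenvalues $\sum_j\frac{4\pi^2}{a_j^2}(k_j-t_j)^2$; hence $\l_1(\Delta_A)=\sum_j\frac{4\pi^2}{a_j^2}\dist(t_j,{\bf Z})^2$. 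On the other hand, by orthogonality of the $A_j$, $d(h,{\cal L}_{\bf Z})^2=\vert\Omega\vert\sum_j\frac{4\pi^2}{a_j^2}\dist(t_j,{\bf Z})^2$, so the two sides of the bound in (2) coincide.

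The computation with $v_\omega$ is elementary; the delicate part is the Hodge-theoretic bookkeeping on a manifold with boundary. One must justify that, in $A=\delta\psi+h$, no boundary term obstructs the $L^2$-orthogonality of $\delta\psi$ and $h-\omega$, and that $\delta\psi$ genuinely belongs to the form domain associated with the absolute boundary conditions defining $\lambda_{1,1}(\Omega)$, so that the min-max inequality $\normsq{d\delta\psi}\ge\lambda_{1,1}(\Omega)\normsq{\delta\psi}$ applies. Both facts come from the Hodge--Morrey--Friedrichs decomposition (as in [Sc]), but they should be made explicit rather than merely invoked.
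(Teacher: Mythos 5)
Your proof is correct and follows essentially the same route as the paper's: test the Rayleigh quotient on $v_\omega=e^{i\int\omega}$ for $\omega\in{\cal L}_{\bf Z}$, exploit the $L^2$-orthogonality of $\delta\psi$ and $h-\omega$ in the Hodge decomposition, bound $\normsq{\delta\psi}$ by $\normsq{B}/\lambda_{1,1}(\Omega)$ via the min-max characterization on co-exact tangential $1$-forms, and verify sharpness on a flat rectangular torus. Your extra remarks (coercivity of $\omega\mapsto\norm{h-\omega}$ so the minimum is attained, and checking that $\delta\psi$ is tangential so it lies in the absolute-boundary-condition form domain) are correct and slightly more careful than the paper's write-up, but not a different argument.
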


This is only the first step in the proof of a quite general upper bound for $\lambda_k(\Delta_A)$, valid for any positive integer $k$.  Let us state it. Assume that $(M^n,g)$ has a Ricci curvature bound:
$$
{\rm Ric}(M,g)\ge -a^2(n-1).
$$
Then we have:

\begin{thm} \label{main1} Let $A$ be any co-closed potential $1$-form on $\Omega$. According to (\ref{boundary conditions}) in Proposition \ref{basic facts}, we can split:
$$
A=h+\delta\psi,
$$
where $dh=\delta h=0$. Then, 
there exist positive constants $c_1,c_2,c_3$ depending only on the dimension $n$ of $M$ such that
$$
\lambda_k (\Delta_A)\le c_1(\Omega,A)+c_2a^2+c_3\left(\frac{k}{\vert \Omega\vert}\right)^{2/n},
$$
with
$$
c_1(\Omega,A)=\frac{c_1}{\vert \Omega\vert}\left(\frac{\Vert B\Vert^2}
{\lambda_{1,1}(\Omega)}+ d(h,{\cal L}_{\bf Z})^2\right).
$$
\end{thm}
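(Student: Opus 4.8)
\textbf{Proof proposal for Theorem \ref{main1}.}

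The plan is to combine the first-eigenvalue bound of Proposition \ref{harmonic potential} with a standard metric/covering argument producing many test functions with disjoint supports. The key reduction is that, by Gauge invariance (Proposition \ref{basic facts}(1)) and the decomposition $A=h+\delta\psi$, we may work with a single co-closed tangential potential; and, crucially, that $Q_A$ is invariant under replacing $A$ by $A+d\phi$ while the magnetic differential satisfies the diamagnetic-type identity $\abs{d^Au}\le \abs{du}+\abs{A}\abs{u}$ pointwise, so on any set where we can choose a gauge making the "relevant part" of $A$ small, $Q_A(u)$ is controlled by the ordinary Dirichlet energy plus a term measuring the potential. More precisely, I would first prove the single-eigenvalue statement: on $\Omega$, take $u$ to be the $\nabla^A$-parallel-like test function $e^{if}$ where $f$ is chosen so that $df$ best approximates $h$ within ${\cal L}_{\bf Z}$ (this is where $d(h,{\cal L}_{\bf Z})^2$ enters) and then estimate the residual $\norm{d^A e^{if}}^2=\norm{h-df+\delta\psi}^2$. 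Using $h-df\perp \delta\psi$ in $L^2$ when $df$ is harmonic, and bounding $\norm{\delta\psi}^2\le \norm{B}^2/\lambda_{1,1}(\Omega)$ via the spectral gap for co-exact $1$-forms (here $B=dA=d\delta\psi$, so $\psi$ is essentially $d$ acting on a co-exact primitive), gives exactly
$$
\lambda_1(\Delta_A)\le \frac{d(h,{\cal L}_{\bf Z})^2+\norm{B}^2/\lambda_{1,1}(\Omega)}{\abs{\Omega}},
$$
which is Proposition \ref{harmonic potential}.

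Next, to get all $\lambda_k$, I would invoke a Grigor'yan--Netrusov--Yau / Colbois--Maerten style decomposition: under the Ricci lower bound ${\rm Ric}\ge -a^2(n-1)$, the manifold $\Omega$ admits, for each $k$, a family of $k$ mutually disjoint open sets $D_1,\dots,D_k$ such that each $D_j$ either is contained in a metric ball of controlled radius or is an annulus-type region, with the property that one can build on each $D_j$ a test function $u_j$ supported in $D_j$. On each piece I choose $u_j=\chi_j e^{if_j}$ where $\chi_j$ is a Lipschitz cutoff with Rayleigh quotient $\lesssim a^2+(k/\abs{\Omega})^{2/n}$ for the ordinary Laplacian (this is the content of the metric covering lemma), and $f_j$ is a local gauge chosen so that on $D_j$ the harmonic part $h$ is turned into $df_j$ up to the global lattice obstruction. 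Then $Q_A(u_j)\le 2\int\abs{d\chi_j}^2 + 2\int\chi_j^2\abs{h-df_j+\delta\psi}^2$; the first term is the cutoff energy, and the second is bounded, after summing over $j$ and using disjoint supports, by the same global quantity $d(h,{\cal L}_{\bf Z})^2+\norm{B}^2/\lambda_{1,1}(\Omega)$ appearing in $c_1(\Omega,A)$. Since the $u_j$ have disjoint supports they span a $k$-dimensional space on which the Rayleigh quotient is bounded by the maximum over $j$, and the min-max characterization yields the claimed bound with dimensional constants $c_1,c_2,c_3$.

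The main obstacle I anticipate is the gauge-localization step: on each piece $D_j$ one needs a real function $f_j$ with $df_j$ close to $h$ in $L^2(D_j)$ \emph{while keeping $\norm{d\chi_j}$ under control}, and one must ensure the residual terms $\norm{h-df_j}_{L^2(D_j)}^2$, once summed, do not exceed a fixed multiple of the global distance $d(h,{\cal L}_{\bf Z})^2$ rather than of $\norm{h}^2$ — i.e. the lattice obstruction must be handled globally, not piece by piece. The clean way around this is \emph{not} to re-gauge on each piece but to fix one global gauge achieving $\norm{h-df}^2=d(h,{\cal L}_{\bf Z})^2$ once and for all on $\Omega$ (absorbing it into $A$), use the diamagnetic inequality $\abs{d^A(\chi_j)}\le\abs{d\chi_j}+\abs{A}\abs{\chi_j}$ with \emph{real} cutoffs $\chi_j$, and then $Q_A(\chi_j)\le 2\norm{d\chi_j}^2+2\int\chi_j^2\abs{A}^2$; summing over disjoint supports bounds $\sum\int\chi_j^2\abs{A}^2\le\norm{A}^2=d(h,{\cal L}_{\bf Z})^2+\norm{\delta\psi}^2\le d(h,{\cal L}_{\bf Z})^2+\norm{B}^2/\lambda_{1,1}(\Omega)$. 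This sidesteps the per-piece gauge issue entirely, at the cost of only absolute constants. The remaining routine points — existence of the metric decomposition with the stated radius bound under a Ricci lower bound, and the cutoff Rayleigh estimate $\lesssim a^2+(k/\abs{\Omega})^{2/n}$ — are standard and I would cite them rather than reprove them.
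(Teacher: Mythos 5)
Your overall strategy matches the paper's: reduce $Q_A(fu)$ to the Rayleigh quotient of a Schr\"odinger operator $\Delta+V$ with $V=\abs{\omega-h-\delta\psi}^2$ (equivalently, re-gauge globally by the $S^1$-valued map $e^{-i\phi}$ with $d\phi=\omega\in{\cal L}_{\bf Z}$, then use real cutoffs), control $\int_\Omega V$ by $d(h,{\cal L}_{\bf Z})^2+\norm B^2/\lambda_{1,1}$, and feed plateau functions from a Colbois--Maerten-type metric decomposition into min-max. You also correctly identify that re-gauging piece-by-piece is dangerous and that a single global gauge is the clean route. Up to this point you have reconstructed the paper's argument.

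However, your final bound does not follow from what you wrote: you control only the \emph{sum} $\sum_j\int\chi_j^2\abs{\tilde A}^2\le\norm{\tilde A}^2=d(h,{\cal L}_{\bf Z})^2+\norm{\delta\psi}^2$, but the min-max on the span of the disjointly supported $\chi_j$ requires a bound on the \emph{maximum} over $j$ of $\dfrac{\int\chi_j^2 V}{\int\chi_j^2}$. With $\int\chi_j^2\gtrsim\abs\Omega/k$ for each piece, your bound on the sum permits in the worst case $\int\chi_j^2 V\approx\norm{\tilde A}^2$ for a single $j$, giving a per-piece Rayleigh quotient $\sim k\,\norm{\tilde A}^2/\abs\Omega$, i.e.\ off by a factor of $k$ from the desired $c_1(\Omega,A)$. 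The paper closes precisely this gap by starting from $3k$ (not $k$) disjoint subsets and running a double pigeonhole: at least $2k$ of them satisfy $\abs{\Omega_i^r}\le\abs\Omega/k$, and among these at least $k$ satisfy $\int_{\Omega_i^r}V\le\frac1k\int_\Omega V$. Only for those $k$ good pieces does one build the plateau functions, and then the per-piece Rayleigh quotient of the potential term is $\lesssim\int_\Omega V/\abs\Omega$, as needed. Without this selection step your argument does not yield the stated estimate; once you insert it (and accordingly start from $3k$ subsets rather than $k$), your proof coincides with the paper's.

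One further small point: the test functions are supported in the enlarged sets $\Omega_j^r$, not in the $\Omega_j$ themselves, so when you invoke disjointness you must ensure the \emph{enlargements} remain disjoint --- the paper gets this by requiring $d(\Omega_i,\Omega_j)\ge 3r$.
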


We now collect a few remarks.

\begin{enumerate}

\item In the bound, the only term depending on the potential $A$ is $c_1(\Omega,A)$. The other terms depend only on the geometry of $\Omega$, in particular we observe that when $A=0$ then
$\Delta_A$ is the usual Laplacian $\Delta$ and the bound reduces to :
$$
\lambda_k (\Delta)\le  c_2a^2+c_3\left(\frac{k}{\vert \Omega\vert}\right)^{2/n}.
$$
which was already proved in [CM].

\item
Let us now fix the  Riemannian metric $g$ and consider all possible potentials $A$ on $\Omega$. Then, the basis $A_1,..,A_m$ can be fixed once and for all,  and does not depend on $A$. The dependance on $A$ is only through the distance to the integral lattice ${\cal L}_{\bf Z}$  and 
through $\Vert B\Vert^2$  (note that $B=dA$ is also independent on $g$).  
 
\item
It is known that for a fixed Riemannian metric, the first eigenvalue $\lambda_1(\Delta_A)$ may be arbitrarily large if $\Vert B\Vert$ is large. An example for the round sphere is given in [BCC]. However, it is easy to show that large $\Vert B\Vert$ does not necessarily implies that $\lambda_1(\Delta_A)$ is large.

\item
The bound in terms of $\Vert B\Vert^2$ is probably not optimal, at least in the case of surfaces. In [Er2], Erd\"{o}s has established a bound in terms of $\Vert B\Vert$: the proof is much more complicated than that for the bound involving  $\Vert B\Vert^2$.

\item
We observe that the above  estimates are compatible with the Weyl law. 
\end{enumerate}

The strategy of the proof is to adapt a method developped in [CM]: we construct a suitable family of disjoint subsets of $\Omega$, and a family of test functions for the standard min-max characterization of the spectrum which are associated to these subsets. In the case of the Laplacian acting on functions, we choose as test functions some plateau functions supported on these subsets. However, in our case, the constant functions are no longer parallel for the modified connection $\nabla^A$,  because of the presence of the potential $A$. Therefore, an argument ad-hoc will be applied.


\subsection{Lower bounds}\label{lowerbounds}

We will give lower bounds  for $\lambda_1(\Delta_A)$ when the potential $A$ is a {\it closed} $1$-form and the manifold is a Riemannian cylinder.  These lower bounds are given in terms of the flux of the $1$-form across the generator of the $1$-homology (more precisely, in terms of the distance defined in \eqref{dflux}) and in terms of the geometry of the manifold.  They have the advantage to be sharp and we will actually characterize the equality case, which will force the cylinder to be a Riemannian product (in particular, a flat metric). The disavantage is perhaps that the dependance on the geometry is not very explicit. This is the reason why we will study some specific situations like a revolution cylinder or an annular region in the plane bounded by two convex curves. In this last situation, we will get a lower bound for 
$\lambda_1(\Delta_A)$ where the dependance on the geometry is very explicit.

\medskip
\noindent
\textbf{Cylinders.} By a {\it Riemannian cylinder} we mean a  domain $(\Omega,g)$ diffeomorphic to
$[0,1]\times\sphere 1$ and endowed with a Riemannian metric $g$. 
Note that $\partial \Omega$ has two components: $\partial\Omega=\Sigma_1\cup\Sigma_2$ where
$$
\Sigma_1=\{0\}\times\sphere 1, \quad \Sigma_2=\{1\}\times\sphere 1.
$$
We will need to foliate the cylinder by the (regular) level curves of a smooth function $\psi$. We introduce

\nero ${\cal F}_{\Omega}$, the family of smooth real-valued functions on $\Omega$ which have no critical points in $\Omega$ and which are  constant on each component of the boundary of $\Omega$. 

\medskip

As 
$\Omega$ is a cylinder, we see that ${\cal F}_{\Omega}$ is not empty. 
If $\psi\in{\cal F}_{\Omega}$, we set:
$$
K=K_{\Omega,\psi}=\dfrac{\sup_{\Omega}\abs{\nabla\psi}}{\inf_{\Omega}\abs{\nabla\psi}}.
$$
It is clear that, in the definition of the constant $K$, we can assume that the range of $\psi$ is the interval $[0,1]$, and that $\psi=0$ on $\Sigma_1$ and $\psi=1$ on $\Sigma_2$. 
Note that  the level curves of the function $\psi$ are all smooth, closed and connected; moreover they are all homotopic to each other so that the flux of any closed $1$-form $A$ across any of them is the same, and will be denoted by $\Phi^A$.

\medskip
\noindent
We say, briefly, that $\Omega$ is {\it $K$-foliated by the level curves of $\psi$.} 

\medskip

Finally, we say that $\Omega$ is a Riemannian product if it is isometric to $[0,a]\times\sphere 1(R)$ for suitable positive constants  $a,R$.

\smallskip

We now give a lower bound when $\Omega$ is an arbitrary cylinder. This lower bound will involve the constant $K$ defined above: is is not always easy to estimate $K$. In Section \ref{estimate K} we will show how to estimate $K$ in more general cases,   in terms of the metric tensor. Note that $K\geq 1$; we will see that in many interesting situations (for example, for revolution cylinders) one has in fact $K=1$. 

\begin{thm} \label{main3} \item a) Let $(\Omega,g)$ be a Riemannian cylinder, and let $A$ be a closed $1$-form on $\Omega$. Assume that $\Omega$ is $K$-foliated by the level curves of the smooth function $\psi\in{\cal F}_{\Omega}$. Then:
$$
\lambda_1(\Omega,A)\geq\dfrac{4\pi^2}{KL^2}\cdot d(\Phi^A,{\bf Z})^2,
$$
where $L$ is the maximum length of a level curve of $\psi$ and $\Phi^A$ is the flux of $A$ across any of the boundary components of $\Omega$.

\item b) Equality holds if and only if the cylinder $\Omega$  is  a Riemannian product.
\end{thm}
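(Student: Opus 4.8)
\noindent\emph{Sketch of a possible proof.}\;
The plan is to work with a first eigenfunction $u$ and estimate its Rayleigh quotient $Q_A(u)/\norm{u}^2$ by slicing $\Omega$ along the level curves $c_t=\psi^{-1}(t)$ of the foliating function, applying a one--dimensional magnetic Poincar\'e inequality on each slice, and then gluing the slices together with the coarea formula. As a preliminary reduction I would use the gauge invariance of the spectrum (Proposition \ref{basic facts}) to replace $A$ by a cohomologous potential lying in ${\rm Har}_1(\Omega)$; this changes neither $\lambda_1(\Omega,A)$ nor the flux $\Phi^A$, and it has the advantage that the new $A$ is tangential, so $i_NA=0$ on $\partial\Omega$. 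I would also normalise $\psi$ so that $\psi\colon\Omega\to[0,1]$ with $\psi\equiv0$ on $\Sigma_1$ and $\psi\equiv1$ on $\Sigma_2$; the constants $K$ and $L$ and the flux $\Phi^A$ are all unaffected.

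For part a), let $T$ be a unit field tangent to the level curves and $N=\nabla\psi/\abs{\nabla\psi}$ the unit normal. Pointwise $\abs{d^Au}^2\ge\abs{d^Au(T)}^2$, and the restriction of $d^Au(T)$ to $c_t$ is exactly the magnetic differential $d^{A|_{c_t}}(u|_{c_t})$ of the restricted data on the circle $c_t$, whose flux is $\Phi^A$ for every $t$ (the level curves are mutually homotopic). The key one--dimensional estimate, proved by a Fourier series computation after noting that on a circle every $1$-form is gauge-equivalent to a constant one with the same flux, is
$$
\int_{c_t}\abs{d^{A|_{c_t}}v}^2\,ds\ \ge\ \frac{4\pi^2}{\ell_t^2}\,d(\Phi^A,{\bf Z})^2\int_{c_t}\abs{v}^2\,ds\ \ge\ \frac{4\pi^2}{L^2}\,d(\Phi^A,{\bf Z})^2\int_{c_t}\abs{v}^2\,ds,
$$
where $\ell_t\le L$ is the length of $c_t$. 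Feeding this into the coarea formula $\int_\Omega f\,\dvol=\int_0^1\big(\int_{c_t}f/\abs{\nabla\psi}\,ds\big)\,dt$ applied to $Q_A(u)\ge\int_\Omega\abs{d^Au(T)}^2\,\dvol$, and bounding $\abs{\nabla\psi}$ below by $\inf_\Omega\abs{\nabla\psi}$ in the slices of the numerator and above by $\sup_\Omega\abs{\nabla\psi}$ in the slices of $\norm{u}^2=\int_\Omega\abs{u}^2\,\dvol$, the ratio $\inf/\sup=1/K$ pops out and yields $Q_A(u)\ge\frac{4\pi^2}{KL^2}d(\Phi^A,{\bf Z})^2\norm{u}^2$, which is the claimed bound.

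For part b) I would assume $d(\Phi^A,{\bf Z})>0$ (if the flux is integral, both sides vanish by Theorem \ref{shikegawa} and there is nothing to characterise) and trace the equality back through the chain above. Equality in $\abs{d^Au}^2\ge\abs{d^Au(T)}^2$ forces the normal component $d^Au(N)=Nu-iuA(N)=Nu$ to vanish, so $u$ is constant along the normal trajectories. Since the nodal set of the Schr\"odinger eigenfunction $u$ has measure zero (unique continuation), equality in the coarea bound for $\norm{u}^2$ forces $\abs{\nabla\psi}\equiv\inf_\Omega\abs{\nabla\psi}$, so $\abs{\nabla\psi}$ is constant and $K=1$; after rescaling, $\abs{\nabla\psi}\equiv1$, the normal trajectories are unit-speed geodesics, and in the associated Fermi coordinates $g=d\psi^2+f(\psi,\theta)^2\,d\theta^2$ with $\theta\in{\bf R}/2\pi{\bf Z}$. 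Equality in the second slice inequality (valid for a.e.\ $t$) then forces $\ell_t\equiv L$, i.e.\ all level curves have the same length. Finally I would exploit $A\in{\rm Har}_1(\Omega)$ in these coordinates: writing $A=a(\psi,\theta)\,d\theta$, closedness gives $\partial_\psi a=0$ and co-closedness gives $\partial_\theta(a/f)=0$, so $f(\psi,\theta)=a(\theta)/\nu(\psi)$ for some function $\nu$; integrating in $\theta$ shows $\nu(\psi)\,\ell_t=2\pi\Phi^A$, whence (using $\Phi^A\ne0$ and $\ell_t\equiv L$) $\nu$ is constant and $f=f(\theta)$ is independent of $\psi$. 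Passing to arc length along a level curve turns $g$ into $d\psi^2+d\eta^2$, exhibiting $\Omega$ as the Riemannian product $[0,a]\times\sphere 1(L/2\pi)$. The converse implication is a direct separation-of-variables computation: on $[0,a]\times\sphere 1(R)$ with harmonic $A$ one finds $\lambda_1=\frac{1}{R^2}d(\Phi^A,{\bf Z})^2$, which matches the bound since there $K=1$ and $L=2\pi R$.

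The routine ingredients are the one--dimensional magnetic Poincar\'e inequality and the two uses of the coarea formula. The delicate part --- and the step I expect to be the real obstacle --- is the equality discussion: one has to be careful that the slice inequalities are saturated only for a.e.\ $t$ and promote this (via unique continuation for $u$ and the continuity of $t\mapsto\ell_t$ and $t\mapsto\int_{c_t}\abs{u}^2$) to statements holding identically, and one must bring in the equation $\delta A=0$ to exclude warped-product metrics, since the equalities by themselves only pin down $\abs{\nabla\psi}$ and the common length of the level curves.
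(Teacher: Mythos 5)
Your proof of the lower bound in part a) is essentially the paper's: restrict the eigenfunction to each level circle, apply the one--dimensional magnetic eigenvalue estimate, and integrate with the coarea formula, with the ratio $\sup\abs{\nabla\psi}/\inf\abs{\nabla\psi}=K$ appearing exactly as you describe. That part, and the converse direction of b) (a direct separation-of-variables computation on the product), are fine.

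The equality discussion in part b), however, has a genuine gap, and it is precisely at the spot you yourself flag as delicate. After replacing $A$ by its representative in ${\rm Har}_1(\Omega)$, you write $d^Au(N)=Nu-iuA(N)=Nu$, and later $A=a(\psi,\theta)\,d\theta$: both steps assume that the $d\psi$-component of $A$ vanishes \emph{identically on $\Omega$}. But the tangential condition $i_NA=0$ holds only on $\partial\Omega$; for a harmonic tangential $1$-form on a warped cylinder $g=d\psi^2+f(\psi,\theta)^2\,d\theta^2$, the coefficient of $d\psi$ is in general nonzero in the interior unless $f$ is already independent of $\theta$ --- which is close to the conclusion you want, so the argument is circular. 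You also cannot repair this by a further gauge transformation: one can always gauge so that $A(\partial_\psi)\equiv0$ (this is the Lemma in the paper's Step 2, giving $A=H(t)\,dt$), and then indeed $u=u(t)$; but in that gauge $\delta A=-\theta^{-1}\partial_t(H/\theta)$ is generally nonzero, so the relation $\partial_\theta(a/f)=0$ that your argument leans on to pin down $f$ is no longer available. What actually rules out the warped products in the paper is not co-closedness of $A$ but the explicit form of the first eigenfunctions of the magnetic Laplacian on the circle (Proposition \ref{circle}): the restriction $u(t)$ must equal $e^{i\phi(t)}\bigl(w_1(r)e^{\frac{\pi i}{L}s(r,t)}+w_2(r)e^{-\frac{\pi i}{L}s(r,t)}\bigr)$ with $s(r,t)=\int_0^t\theta(r,\tau)\,d\tau$, and the separate calculus Lemma \ref{calculus} shows that the only way such an expression can be independent of $r$ is $s(r,t)=t$, i.e.\ $\theta\equiv1$. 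Your outline replaces this lemma by the $\delta A=0$ constraint, and that substitution does not go through.
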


\nero It is clear that we can also state the lower bound as follows:
$$
\lambda_1(\Omega,A)\geq\dfrac{4\pi^2}{\tilde K_{\Omega}}\cdot d(\Phi^A,{\bf Z})^2,
$$
where $\tilde K_{\Omega}$ is the invariant :
$$
\tilde K_{\Omega}=\inf_{\psi\in{\cal F}_{\Omega}}K_{\Omega,\psi}L_{\psi}^2\quad\text{and}\quad L_{\psi}=\sup_{r\in {\rm range}(\psi)}\abs{\psi^{-1}(r)}.
$$
We will  obtain a good estimate of $K$ for an annular region in the plane. This will give rise to the following  consequence.

\begin{thm} \label{main2} Let $\Omega$ be an annulus in $\real 2$ bounded by the inner curve $\Sigma_1$ and the outer curve $\Sigma_2$, both convex.  Assume that $A$ is a closed potential having flux $\Phi^A$ around $\Sigma_1$. Then:
$$
\lambda_1(\Omega,A)\geq \dfrac{4\pi^2\beta^2}{B^2L^2} d(\Phi^A,{\bf Z})^2
$$
where $\beta$ (resp. $B$) is the minimum (resp. maximum) distance of $\Sigma_1$ to $\Sigma_2$ and $L$ is the length of the outer boundary. 
\end{thm}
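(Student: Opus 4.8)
The plan is to deduce Theorem \ref{main2} from Theorem \ref{main3} by producing a foliating function $\psi \in {\cal F}_\Omega$ on the planar annulus and estimating the constant $K_{\Omega,\psi}$ and the maximal level-curve length $L_\psi$ in terms of the convex geometry. The natural candidate for $\psi$ is the function whose level curves interpolate between $\Sigma_1$ and $\Sigma_2$; the most convenient choice, given that both boundary curves are convex, is to use a \emph{normal-flow} type foliation, e.g. the level curves $\Sigma_t = \{x : \dist(x,\Sigma_1) = t\,\cdot\,(\text{something})\}$ suitably reparametrized so that $\psi$ is smooth with no critical points and equals $0$ on $\Sigma_1$, $1$ on $\Sigma_2$. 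Convexity of $\Sigma_1$ is exactly what guarantees that the outward equidistant curves from $\Sigma_1$ stay smooth, embedded, and sweep out the annulus without developing singularities before reaching $\Sigma_2$; this is the point where the doubly-convex hypothesis is genuinely used.

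Next I would estimate the two quantities entering $\tilde K_\Omega$. First, the length of every level curve: since the level curves of this foliation are convex curves nested between $\Sigma_1$ and $\Sigma_2$, and a convex curve enclosed by another convex curve has no greater length, every level curve has length at most the length $L$ of the outer boundary $\Sigma_2$; hence $L_\psi \le L$. Second, the ratio $K_{\Omega,\psi} = \sup_\Omega |\nabla\psi| / \inf_\Omega |\nabla\psi|$: here I would choose the parametrization of $\psi$ so that $|\nabla\psi|$ is comparable to the reciprocal of the "width" of the annulus in the normal direction at each point. With the equidistant foliation, $|\nabla \psi|$ along the normal geodesic from a point $p \in \Sigma_1$ is essentially $1/\ell(p)$ where $\ell(p)$ is the length of that normal segment until it hits $\Sigma_2$; since $\beta \le \ell(p) \le B$ for all $p$ (by definition of $\beta, B$ as the min and max distance from $\Sigma_1$ to $\Sigma_2$), we get $K_{\Omega,\psi} \le B/\beta$. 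Combining, $\tilde K_\Omega \le K_{\Omega,\psi} L_\psi^2 \le (B/\beta) L^2 = B^2 L^2/\beta^2$, and plugging into the remark following Theorem \ref{main3} yields
$$
\lambda_1(\Omega,A) \ge \frac{4\pi^2}{\tilde K_\Omega} d(\Phi^A,{\bf Z})^2 \ge \frac{4\pi^2 \beta^2}{B^2 L^2} d(\Phi^A,{\bf Z})^2,
$$
which is exactly the claimed bound.

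The main obstacle is making the estimate on $K_{\Omega,\psi}$ rigorous: the naive equidistant foliation does not automatically have $|\nabla\psi|$ constant along each normal trajectory (the first variation of arclength along a normal flow involves the curvature of the level curves), so one must either choose a cleverer reparametrization of the radial coordinate — for instance, set $\psi$ to be a function only of the signed distance to $\Sigma_1$, composed with a rescaling that normalizes each normal segment to length $1$ — or carefully bound the oscillation of $|\nabla\psi|$ using the convexity-driven monotonicity of the curvature of the equidistant curves. A clean way around this is to define $\psi$ implicitly by requiring each $\psi$-segment (gradient trajectory) to be a \emph{straight normal ray} from $\Sigma_1$ of length equal to its full traversal length, and let $\psi$ increase linearly in arclength along each such ray; then $|\nabla \psi|(p) = 1/\ell(\pi(p))$ with $\pi$ the nearest-point projection to $\Sigma_1$, giving the ratio $B/\beta$ directly. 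Verifying that this $\psi$ is smooth and critical-point-free on the open annulus — again a consequence of convexity of $\Sigma_1$ (so the projection $\pi$ is smooth) and of $\Sigma_2$ (so the rays reach $\Sigma_2$ transversally) — is the technical heart of the argument, and I would isolate it as a lemma before invoking Theorem \ref{main3}.
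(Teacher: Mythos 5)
Your overall strategy — foliate the annulus by normal rays from $\Sigma_1$, take $\psi$ linear in arclength along each ray, bound the level-curve lengths by $L$, bound $K_{\Omega,\psi}$, and invoke Theorem~\ref{main3} — is exactly the paper's approach. But the key quantitative step is wrong. You assert that for this choice of $\psi$ one has $|\nabla\psi|(p)=1/\ell(\pi(p))$, where $\ell$ is the length of the normal ray through $p$, and conclude $K_{\Omega,\psi}\le B/\beta$. This is false: $1/\ell$ is only the \emph{radial} component of $\nabla\psi$ (the derivative of $\psi$ along the ray). Because the ray length $\rho(s)$ varies with the footpoint $s\in\Sigma_1$, the level curves $\{\psi=c\}$ are not orthogonal to the rays, so $\nabla\psi$ has a nonzero tangential component. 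In the normal coordinates $(t,s)$ used in the paper, $\psi(t,s)=t/\rho(s)$ and
$$
|\nabla\psi|^2=\frac{1}{\rho(s)^2}+\frac{t^2\rho'(s)^2}{\theta(t,s)^2\rho(s)^4},
$$
which exceeds $1/\rho(s)^2$ whenever $\rho'(s)\neq0$. Controlling this extra term is precisely where the doubly convex hypothesis enters: one shows that the angle between the ray and $\nabla\psi$ has cosine bounded below by $\beta/B$ (convexity of $\Sigma_2$ gives this at the outer endpoint; convexity of $\Sigma_1$ gives monotonicity of this cosine along the ray), yielding the weaker but correct bound $K_{\Omega,\psi}\le B^2/\beta^2$, not $B/\beta$. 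Your sketch contains no argument for this angle control, and your value $B/\beta$ is not reachable by this construction.

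There is also an algebraic slip: even granting $K_{\Omega,\psi}\le B/\beta$, you write $(B/\beta)L^2=B^2L^2/\beta^2$, which would require $B=\beta$. With the correct $K_{\Omega,\psi}\le B^2/\beta^2$ the arithmetic does close and gives the stated bound. Finally, your level-curve length bound "a convex curve enclosed by another has no greater length" is only applicable if you first verify that the level curves of this $\psi$ are themselves convex (they are not in general); the cleaner route is to compute the length of $\{\psi=c\}$ directly in the $(t,s)$ coordinates and observe that the integrand $\sqrt{(1+c\rho k)^2+c^2\rho'^2}$ is nondecreasing in $c$ because $k\ge0$ (convexity of $\Sigma_1$), so every level curve is no longer than $\Sigma_2$. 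In short: right foliation, right framework, but the gradient-norm estimate — the technical heart that makes $\beta$ and convexity necessary — is missing and the claimed bound on $K$ is incorrect.
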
 

In section \ref{convex}, we will explain why we need to control $\beta$, $B$, $L$, and why we need to impose convexity.

\section{Upper bounds} \label{upper} In this section, we prove Proposition \ref{harmonic potential} and Theorem \ref{main1}.

\subsection{Proof of Proposition \ref{harmonic potential}}
We  first  show the inequality
$$
\l_1(\Omega,A)\leq\frac{ d(h,{\cal L}_{\bf Z})^2}{\vert \Omega\vert}+\frac{\Vert B\Vert^2}{\lambda_{1,1}(\Omega)\vert \Omega\vert}.
$$
We recall that $A=\delta\psi+h$ denotes the potential, $\psi$ is a smooth $2$-form, $h\in{\rm Har}_1(\Omega)$,  $\lambda_{1,1}(\Omega)$ denotes the first eigenvalue of the Laplacian acting on co-exact $1$-forms,  $B=dA$ is the curvature of the potential $A$, and ${\cal L}_{\bf Z}$ denotes the integral lattice generated by a basis $A_1,\dots,A_m$.

\medskip
\noindent
\textbf{Step 1.} We consider the case where the potential belongs to the integral lattice ${\cal L}_{\bf Z}$. Let us denote it by $\omega$ (that is, $h=\omega$) and write

$$
\omega=n_1A_1+\dots+n_mA_m,
$$
for integers $n_1,\dots,n_m$. We want to verify that the magnetic Laplacian with potential $\omega$ has first eigenvalue equal to zero: we will use this fact in order to construct the test functions in the subsequent steps. Fix a base point $x_0$ and define, for $x\in\Omega$:
\begin{equation}\label{path}
\phi(x)\doteq\int_{x_0}^x\omega,
\end{equation}
where on the right we mean integration of $\omega$ along any path joining $x_0$ with $x$. As $\omega$ is closed, $\phi(x)$ does not depend on the choice of two homotopic paths and since the flux of $A$ across each $c_j$ is an integer, $\phi(x)$ is multivalued and defined up to $2\pi \mathbb Z$. This implies that the function $u(x)=e^{i\phi(x)}$ is well defined. 
As 
$
d\phi=\omega
$
we see that
$
du=iu\omega.
$
Therefore
$$
d^{\omega}u=du-iu\omega=0,
$$
the function $u$ is $\nabla^{\omega}$-parallel and is thus an eigenfunction of $\Delta_{\omega}$ associated 
to the eigenvalue $0$.

\medskip
\noindent
\textbf{Step 2.} We now look at the case where $A=\delta\psi+h$ and $h$ does not necessarily belong to the integral lattice ${\cal L}_{\bf Z}$. Let $\omega$ be an arbitrary form in ${\cal L}_{\bf Z}$, and construct, as above, the function 
$$
u(x)=e^{i\phi(x)},
$$
with $\phi(x)$ as in \eqref{path}. 
Then $du=iu\omega$ and therefore
$$
d^Au=du-iuh-iu\delta \psi=iu(\omega-h-\delta \psi).
$$
Since $\abs{u}=1$, we obtain:
$$
\abs{d^Au}^2=\abs{\omega-h-\delta \psi}^2.
$$
We use $u(x)$ as test-function for the first eigenvalue of $\Delta_A$. Then:
\begin{equation}\label{ltwo}
\lambda_1(\Delta_A)\leq \dfrac{\int_{\Omega}\abs{d^Au}^2}{\int_{\Omega}\abs{u}^2}=\dfrac{\norm{\omega-h-\delta \psi}^2}{\vert\Omega\vert}= \dfrac{\norm{\omega-h}^2+\norm{\delta \psi}^2}{\vert\Omega\vert}.
\end{equation}
The last step follows from the fact that, as $\omega-h$ is harmonic, it is $L^2$-orthogonal to $\delta \psi$.
Now observe that, since $\delta\psi$ is coexact and tangential, one has by the variational characterization of the eigenvalue $\lambda_{1,1}(\Omega,g)$:
$$
\frac{\int_{\Omega} \vert d \delta \psi\vert^2}{\int_{\Omega} \vert \delta \psi\vert^2} \ge \lambda_{1,1}(\Omega,g)
$$
 As $d \delta\psi=B$, we have
\begin{equation}\label{lambdaoneone}
\int_{\Omega} \vert \delta \psi \vert^2 \le \frac{1}{\lambda_{1,1}(\Omega,g)}\Vert B\Vert^2.
\end{equation}
Taking the infimum on the right-hand side of \eqref{ltwo} over all $\omega\in{\cal L}_{\bf Z}$ we obtain, taking into account \eqref{lambdaoneone}:
$$
\l_1(\Delta_A)\leq\frac{ d(h,{\cal L}_{\bf Z})^2}{\vert \Omega\vert}+\frac{\Vert B\Vert^2}{\lambda_{1,1}(\Omega)\vert \Omega\vert}
$$
as asserted.

\subsection{The sharpness} \label{torus} Recall that if $B=0$, we get the estimate 
$$
\l_1(\Delta_A)\leq\frac{ d(A,{\cal L}_{\bf Z})^2}{\vert \Omega\vert}.
$$
Let us show that it is sharp. We have to construct an example where
$
\l_1(\Delta_A)=\frac{ d(A,{\cal L}_{\bf Z})^2}{\vert \Omega\vert}.  
$
This will be the case for rectangular flat tori. We start from the case where $\Omega$ is a circle of radius $R$  and perimeter $L=2\pi R$ and let $A$ be any $1$-form on $\Omega$.  Let $\Phi^A$ be the flux of $A$ across the circle.  By the calculation in Proposition \ref{circle}
we know that the first eigenvalue is
$$
\lambda_1(\sphere 1(R),A)=\dfrac{4\pi^2}{L^2}d(\Phi^A,{\bf Z})^2
$$
where $d(\Phi^A,{\bf Z})$ denotes the minimum distance of $\Phi^A$ to the lattice of integers. 

We first assume that the potential $A$ is harmonic and that the (unique up to isometries) metric is the standard one, written $g=dt^2$ where $t\in [0,L]$ is arc-length.
Now take $c_1(t)=t$ be the cycle which goes around the circle once : its dual $1$-form is $A_1=\frac{2\pi}{L}dt$ and $\omega\in{\cal L}_{\bf Z}$ if and only if
$$
\omega=\dfrac{2\pi k}{L}\,dt
$$
for some integer $k$. Any harmonic $1$-form $A$ is written $A=\Phi^AA_1=\frac{2\pi\Phi^A}{L}\,dt$ and then
$$
\int_{\sphere 1(R)}\abs{A-\omega}^2=L\cdot \dfrac{4\pi^2}{L^2}(\Phi^A-k)^2,
$$
which gives, as $L=\abs{\Omega}$:
$$
\dfrac{1}{\abs{\Omega}}\norm{A-\omega}^2=\dfrac{4\pi^2}{L^2}(\Phi^A-k)^2.
$$
Taking the infimum over all $k\in\bf Z$ and taking into account the above proposition we see
$$
\dfrac{1}{\abs{\Omega}}d(A,{\cal L}_{\bf Z})^2=\lambda_1(\sphere 1(R))
$$
as asserted. If $A$ is any $1$-form, then it must be closed by dimensional reasons, and  by Hodge theorem we can always decompose it
$$
A=h+df
$$
where $h=a\,dt$ is harmonic. Then, by gauge invariance, the spectrum for the potential  $A$ is equal to the spectrum for the potential  $h$; as $\Phi^A=\Phi^h$, and since the spectrum depends only on the flux, the conclusion follows. 

\medskip

Let us now see that we have sharpness for any flat rectangular torus, that is, for any Riemannian product
$$
M=\sphere 1(\frac{p_1}{2\pi})\times\cdots\times \sphere n(\frac{p_n}{2\pi}).
$$
Note that $M$ is  the quotient of $\real n$ by the lattice generated by the basis
$
(p_1e_1,\dots,p_ne_n).
$
By Proposition \ref{torus} in the Appendix we know that the first eigenvalue of the magnetic Laplacian with potential $A$ (a closed $1$-form) is 
$$
\lambda_1(M,A)=\sum_{j=1}^n\dfrac{4\pi^2}{p_j^2}\cdot d(\Phi^A_{j},{\bf Z})^2,
$$
where $\Phi^A_{j}$ is the flux of $A$ across $\sphere 1(\frac{p_j}{2\pi})$. One verifies that the generic harmonic $1$-form in the lattice ${\cal L}_{\bf Z}$ is written $\omega=\sum_{j=1}^n\frac{2\pi k_j}{p_j}\,dt_j$; by Gauge invariance, we can assume that $A$ is harmonic, so that
$$
A=\sum_{j=1}^n\frac{2\pi \Phi^A_{j}}{p_j}dt_j.
$$
Repeating word for word the arguments for the one-dimensional case we arrive at the equality
$$
\lambda_1(M,A)=\dfrac{1}{\abs{\Omega}}d(A,{\cal L}_{\bf Z})^2.
$$
We omit further details.

\begin{rem}
 We observe that, on a fixed rectangular flat torus,  $\lambda_1(M,A)$ is maximal precisely when the flux of $A$ across each factor is congruent to $\frac 12$ modulo integers. 
\end{rem}

\subsection{Proof of Theorem \ref{main1}}

For a general potential $A=h+\delta \psi$, the scope is to prove the following estimate

$$
\lambda_k (\Delta_A)\le c_1(\Omega,A)+c_2a^2+c_3\left(\frac{k}{\vert \Omega \vert }\right)^{2/n},
$$
with
 $$
c_1(\Omega,A)=\frac{c_1}{\vert \Omega \vert}\left(\frac{\Vert B\Vert^2}{\lambda_{1,1}}+ d(h,{\cal L}_{\bf Z})^2\right),
$$
and $c_1,c_2,c_3$ are constants depending only on the dimension $n$ of $M$.

\begin{proof}

Note first that, if the first cohomology $H^1(\Omega)=0$, we have $h=0$ and $d(h,{\cal L}_{\bf Z})=0$.

\medskip
We suppose now that $A$ has the general form $A=h+ \delta \psi$ with $dh=\delta h=0$.
Let $(A_1,\dots,A_m)$ be a basis in ${\rm Har}_1(\Omega)$ dual to a given homology basis, let
${\cal L}_{\bf Z}$ be the integral lattice generated by $A_1,\dots,A_m$ and fix $\omega\in {\cal L}_{\bf Z}$ at minimum distance to $h$. This means that
$$
\norm{\omega-h}^2=d(h,{\cal L}_{\bf Z})^2.
$$
In order to construct a family of test functions for the Rayleigh quotient 
$$
R(s) = \frac{\int_{\Omega} \vert (d-iA)s \vert^2 }{\int_{\Omega} \vert s\vert^2}, 
$$
we use the function $u$, eigenfunction for the eigenvalue $0$ of the Laplacian associated to the potential
$\omega$. Then (see the proof of Proposition \ref{harmonic potential}):
$$
du=iu\omega.
$$
We consider functions of the type $s(x)=f(x)u(x)$, where $f$ is a real function.
We have
$$
\begin{aligned}
(d-iA)(fu)&= u df+ fdu-ihuf-iuf\delta \psi \\
&=udf+iuf(\omega-h-\delta \psi) .
\end{aligned}
$$
Since $\vert u\vert=1$:
$$
\vert (d-iA)(fu)\vert^2 \le 2\Big(\vert df \vert^2 +f^2\vert \omega-h-\delta \psi\vert^2  \Big).
$$
We have to control the Rayleigh quotient
$$
R(fu) \le 2\left(\frac{\int_{\Omega} \vert df\vert^2}{\int_{\Omega} f^2}  + \frac{\int_{\Omega} f^2\vert \omega-h-\delta \psi\vert^2}{\int_{\Omega} f^2} \right).
$$
So, we are lead to control the Rayleigh quotient
$$
R(f)=2 \frac{\int_{\Omega}\vert df\vert^2+Vf^2}{\int_{\Omega} f^2}, \quad\text{where $V= \vert \omega-h-\delta \psi\vert^2$.}
$$

Thus, the problem is now to find an upper bound for the spectrum of the operator $\Delta +V$, where $\Delta$ is the usual Laplacian acting on functions and $V=\vert \omega-h-\delta \psi\vert^2 $ is a nonnegative potential. 

\medskip
The idea is to construct a family of disjointed supported domains, and to construct test-functions supported in these domains.

\medskip
First, we show how to control the $L^1$-norm of the potential $V$. We are exactly as in Step 2 of the proof of Proposition \ref{harmonic potential} and we get
 
\begin{equation}\label{potential}
\int_{\Omega}V\leq   \frac{1}{\lambda_{1,1}(\Omega,g)}\Vert B\Vert^2+d(h,{\cal L}_{\bf Z})^2.
\end{equation}

\medskip
\noindent
\textbf{Control of 
$$
R(f)=\frac{\int_{\Omega}\vert df\vert^2+Vf^2}{\int_{\Omega} f^2}.
$$} 
This can be deduced from [Ha] as consequence of a rather difficult result. However, for convenience of the reader, we will give a simpler proof following the argument in [CM], where, in the same context (a domain $\Omega \subset M$, complete manifold with Ricci curvature $\ge -(n-1)a^2$) we have constructed upper bounds for the Laplacian without potential. It turns out that we can follow word for word the method of [CM] with some slight modifications, using  corollary 2.3 and  lemma 3.1 of [CM]. We assume, without loss of generality, that $a=1$.

\medskip

First, we recall the definition of {\it packing constant} : for $r>0$, we define 
$$
C(r)=\text{number of balls of radius $r$ needed to cover a ball of radius $4r$.}
$$
As the volume of $r$-balls tends uniformly to $0$ as $r\to 0$, we see that there exists $r_0>0$ such that, for all $r\leq r_0$:
\begin{equation}\label{techniq}
	{2C(r)\abs{B(x,r)} \le \alpha:= \frac{\abs{\Omega}}{6C(r)k}} \ ,
\end{equation}
holds for all $x\in M$. By Corollary 2.3 in [CM], we get  the existence of $3k$ measurable subsets $\Omega_1,...,\Omega_{3k}$ each of measure 
$\abs{\Omega_i} \ge \frac{\abs{\Omega}}{6C(r)k}$ and such that  $d(\Omega_i,\Omega_j)\ge 3r$ if $i\not =j$, where $d$ denotes the usual Riemannian distance. For a subset $A\in\Omega$ we denote by $A^r$ the tubular neighborhood of $A$ in $\Omega$, with radius $r$ :
$$
A^r=\{x\in\Omega: d(x,A)<r\}.
$$
Then one sees that  $\Omega_i^r$ and $\Omega_j^r$, are also pairwise disjoint.

We can now apply the construction of Lemma 3.1 in [CM] and get an $H^1(\Omega)$-orthogonal 
family of $3k$ test functions $\left(f_{j}\right)^{3k}_{j=1}$ of disjoint support  (each $f_j$ being supported in $\Omega_j^r$) whose Rayleigh quotient satisfies
\begin{equation}\label{rq}
R(f_i) \le \frac{1}{r^2}\frac{\abs{\Omega_i^r\setminus\Omega_i}}{\abs{\Omega_i}}+\frac{\int_{\Omega}Vf_i^2}{\int_{\Omega}f_i^2}
\end{equation}
(the second term on the right hand side does not appear in [CM] because there the operator is the usual Laplacian, while here is $\Delta+V$). Let 
$$
Q= \sharp \left\{i\in \left\{1,...,3k \right\}: \abs{\Omega_i^r} \ge \frac{\abs{\Omega}}{k}\right\}.
$$
Clearly $Q \le k$, so that for at least $2k$ of these $3k$ subsets $\Omega_1,...,\Omega_{3k}$ we have $\abs{\Omega_i^r}\le \frac{\abs{\Omega}}{k}$. 
In turn, for at least $k$ of these $2k$ subset, we have
$$
\int_{\Omega_i^r} V\leq \frac{1}{k}\int_{\Omega} V.
$$
The conclusion is that, for at least $k$ of the above subsets, which, after renumbering, we can assume to be $\Omega_1,\dots,\Omega_k,$ we have at the same time :
\begin{equation}\label{ir}
\abs{\Omega_i^r}\le \frac{\abs{\Omega}}{k}\quad\text{and}\quad \ \int_{\Omega_i^r} V\le \frac{1}{k}\int_{\Omega} V.
\end{equation}

For every such $\Omega_i$, we construct  the corresponding plateau function $f_i$ as Lemma 3.1 of [CM], and use $f_i$ as test function; as $\abs{\Omega_i^r\setminus \Omega_i}\le \frac{\abs{\Omega}}{k}$ and $\abs{\Omega_i}\ge \alpha=\frac{\abs{\Omega}}{6C(r)k}$, taking also into account \eqref{ir},  we arrive at the inequality
\begin{equation}\label{bornepreuve}
R(f_i) \le \frac{1}{r^2} \frac{\abs{\Omega}/k}{\abs{\Omega}/(6C(r)k)}+\frac{\int_{\Omega}Vf_i^2}{\int_{\Omega}f_i^2}\le\frac{6C(r)}{r^2} +6C(r)\frac{\int_{\Omega}V}{\abs{\Omega}}\ .
\end{equation}
 
Let $\omega'_n>0$  be a positive
constant such that $\abs{B(x,r)}\le \omega'_n r^{n}$ for all $r\le1$ in the
hyperbolic space of curvature $-1$. We define the integer $k_0 = \left[\frac{\abs{\Omega}}{12C(1)^2 \omega'_n
}\right] +1 $ and, for every $k\ge k_0$, we set
$$ 
r_k = \left(\frac{\abs{\Omega}}{k}  \frac{1}{12 C(1)^{2} \omega'_n
}\right)^{1/n} \ .
$$
By its definition, $r_k\le 1$ and (\ref{techniq}) holds, since 
$$
{12C(r_k)^2\abs{B(x,r_k)} \le 12C(1)^{2}\omega'_n r_k^n = \frac{\abs{\Omega}}{k}}.
$$
Our inequality \eqref{bornepreuve} becomes: for all $k \ge k_0$,
\begin{equation} \label{ineq}
 \l_k \le \frac{6C(1)}{r_k^2}+6C(1)\frac{\int_{\Omega}V}{\abs{\Omega}}=
6C(1)\Big(12 C(1)^{2} \omega'_n
\Big)^{2/n}\left(\frac{k}{\abs{\Omega}}\right)^{2/n}+6C(1)\frac{\int_{\Omega}V}{\abs{\Omega}}\
\end{equation}
On the other hand, if $k < k_0$, then we obviously have $\l_k \le \l_{k_0}$. Taking into account \eqref{ineq}  we obtain, for all positive integers $k$:
\begin{equation}\label{asympt}
\l_k \le \l_{k_0}  + B_n
\left(\frac{k}{\abs{\Omega}}\right)^{2/n}+6C(1)\frac{\int_{\Omega}V}{\abs{\Omega}} \ ,
\end{equation}
where we have set $B_n := 6C(1)\Big(12 C(1)^{2}\omega'_n \Big)^{2/n}$. 
We now estimate the eigenvalue $\lambda_{k_0}$.
If $k_0=1$ then, taking the constant function $1$ as test-function for the first eigenvalue of $\Delta+V$, one sees that 
\begin{equation}\label{lambdaone}
\lambda_{k_0}\leq \dfrac{1}{\abs{\Omega}}\int_{\Omega}V.
\end{equation}
If $k_0\geq 2$ then  $\frac{\abs{\Omega}}{12C(1)^2\omega'_n }\ge 1$ which implies that, from the definition of $k_0$, we have $k_0\le 2 \frac{\abs{\Omega}}{12C(1)^2\omega'_n }$. Then :
$$
r_{k_0}^2=\left(\frac{\abs{\Omega}}{k_0}  \frac{1}{12 C(1)^{2} \omega'_n}\right)^{2/n}\geq \dfrac{1}{2^{2/n}}.
$$
We now apply \eqref{ineq}  to $k=k_0$ and obtain:
\begin{equation}\label{third}
\l_{k_0} \le
\frac{6C(1)}{r_{k_0}^2}+6C(1)\frac{\int_{\Omega}V}{\abs{\Omega}}=
6C(1) 2^{2/n}+6C(1)\frac{\int_{\Omega}V}{\abs{\Omega}}\ ,
\end{equation}
Comparison of \eqref{asympt}, \eqref{lambdaone} and \eqref{third} says that, for all positive integers $k$, we have:
$$
\lambda_k\leq \alpha_n\frac{\int_{\Omega}V}{\abs{\Omega}}+\beta_n+ B_n
\left(\frac{k}{\abs{\Omega}}\right)^{2/n}
$$
for positive constants $\alpha_n,\beta_n$. To finish the proof, we just have to recall that, from \eqref{potential}:
$$
\frac{\int_{\Omega}V}{\abs{\Omega}}\leq \frac{1}{\abs{\Omega}}\Big(\frac{\abs{B}^2}{\lambda_{1,1}}+d(h,{\cal L}_{\bf Z})^2\Big).
$$
\end{proof}

\subsection{Another upper bound}  In this short paragraph, we give a simple way to get an upper bound when the potential $A$ is \emph{closed}. This will be used in Example \ref{example1} below. The geometric idea is the following: if we have a region $D \subset \Omega$ such that the first absolute cohomology group $H^1(D)$ is $0$, then we can estimate from above the spectrum of $\Delta_A$ in $\Omega$ in terms of the spectrum of the usual Laplacian on $D$. 
The reason is that the potential $A$ is $0$ on $D$ up to a Gauge transformation; then,  on $D$, $\Delta_A$ becomes the usual Laplacian  and any eigenfunction of the Laplacian on $D$ may be extended by $0$ on $\Omega$ and thus used as a test function for the magnetic Laplacian on the whole of $\Omega$. 

\smallskip

Let us give the details.  Let $D$ be a closed subset of $\Omega$ such that, for some (small) $\delta>0$ one has $H^1(D^{\delta},\reals)=0$, where  $D^{\delta}=\{p\in \Omega: {\rm dist}(p,D) < \delta\}$. This happens when $D^{\delta}$ has a retraction onto $D$. We write
$$
\partial D= (\partial D\cap \partial \Omega) \cup (\partial D \cap \Omega)=\partial^{\rm ext}D\cup\partial^{\rm int}D
$$
and we denote by $(\nu_j(D))_{j=1}^{\infty}$ the spectrum of the Laplacian acting on functions, with the Neumann boundary condition on $\partial^{\rm ext}D$ (if non empty) and the Dirichlet boundary condition on $\partial^{\rm int}D$.

\begin{prop}  \label{upperharmonic} Let $\Omega$ be a compact domain of a complete manifold $M$ with smooth boundary $\bd\Omega$. Let $A$ be a closed potential on $\Omega$, and let  $D\subset \Omega$ be a compact subdomain such that $H^1(D,\textbf R)=H^1(D^{\delta},\textbf R)=0$ for some $\delta>0$. Then we have
$$
\lambda_k(\Omega,A) \le \nu_k(D)
$$
for each $k\geq 1$.
\end{prop}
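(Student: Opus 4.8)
The plan is to exploit the gauge invariance of the spectrum (Proposition \ref{basic facts}, part (1)) locally on $D$, and then extend eigenfunctions of the Laplacian on $D$ by zero to obtain valid test functions for $\Delta_A$ on $\Omega$. First I would observe that, since $A$ is closed and $H^1(D^\delta,\reals)=0$, the restriction $A|_{D^\delta}$ is exact: there exists a smooth real-valued function $\phi$ on $D^\delta$ with $d\phi = A$ on $D^\delta$. By the gauge transformation \eqref{gauge}, multiplication by $e^{i\phi}$ gives a unitary identification under which $\Delta_A$ restricted to $D^\delta$ becomes the usual Laplacian $\Delta$ on $D^\delta$ (the potential is killed). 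In particular, if $w$ is an eigenfunction of the Laplacian on $D$ with the mixed boundary conditions (Neumann on $\partial^{\rm ext}D$, Dirichlet on $\partial^{\rm int}D$) for eigenvalue $\nu_j(D)$, then $e^{i\phi}w$ satisfies, on $D$,
$$
\abs{(d-iA)(e^{i\phi}w)}^2 = \abs{dw}^2,
$$
since $(d-iA)(e^{i\phi}w) = e^{i\phi}(dw + iw\,d\phi - iAw\,e^{i\phi}\cdot e^{-i\phi}) = e^{i\phi}\,dw$ using $d\phi=A$.

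Next I would extend $e^{i\phi}w$ by zero across $\partial^{\rm int}D$ to all of $\Omega$; call the extension $\tilde w$. The Dirichlet condition on $\partial^{\rm int}D$ guarantees that $\tilde w \in H^1(\Omega,\C)$ (no jump in the function across the interface, and the Neumann part $\partial^{\rm ext}D$ lies on $\partial\Omega$ where the magnetic Neumann condition \eqref{mneumann} is natural and imposes no constraint on test functions). Then $\tilde w$ is an admissible test function for the variational characterization of $\lambda_k(\Delta_A)$, with
$$
\frac{Q_A(\tilde w)}{\norm{\tilde w}^2} = \frac{\int_D \abs{dw}^2}{\int_D \abs{w}^2}.
$$
Taking $w$ to range over the first $k$ eigenfunctions $w_1,\dots,w_k$ of the Laplacian on $D$, the corresponding extensions $\tilde w_1,\dots,\tilde w_k$ are $L^2$-orthogonal and span a $k$-dimensional subspace of $H^1(\Omega,\C)$ on which the Rayleigh quotient of $Q_A$ is bounded above by $\nu_k(D)$. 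The min-max principle then yields $\lambda_k(\Omega,A)\le \nu_k(D)$.

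The main point requiring care — the only genuine obstacle — is the regularity/admissibility of the zero-extension $\tilde w$: one must check that extending by zero across $\partial^{\rm int}D$ produces an $H^1$ function on $\Omega$ rather than merely an $L^2$ function with a singular distributional gradient along the interface. This is exactly what the Dirichlet condition on $\partial^{\rm int}D$ provides (the trace of $w$ on $\partial^{\rm int}D$ vanishes, so gluing with $0$ leaves no surface term), and it is the reason the Dirichlet condition is imposed there while Neumann is used on the part of $\partial D$ lying in $\partial\Omega$. A secondary technical remark is that $\phi$ need only be defined on $D$ (or a neighborhood) for this argument; the hypothesis $H^1(D^\delta,\reals)=0$ is what makes such a $\phi$ exist and be smooth up to $\partial D$.
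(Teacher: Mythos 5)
Your proposal is correct and follows essentially the same route as the paper: exactness of $A$ on $D^\delta$, use of the gauge to trivialize the connection on $D$, and zero-extension of the mixed Dirichlet/Neumann eigenfunctions as test functions for the min-max principle. The only cosmetic difference is that the paper extends $\phi$ from $D$ to all of $\Omega$ via a partition of unity and applies gauge invariance globally (replacing $A$ by $\tilde A=A+d\phi$, which vanishes on $D$), whereas you incorporate the gauge factor $e^{i\phi}$ directly into the test function without ever extending $\phi$ beyond $D^\delta$; this is a valid and slightly leaner formulation of the same argument.
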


\noindent
\textbf{Proof.} We recall that for any function $\phi$ on $\Omega$, the operator $\Delta_A$ and $\Delta_{A+d\phi}$ are unitarily equivalent and have the same spectrum. As $A$ is closed and, by assumption,  $H^1(D^{\delta},\textbf R)=0$, $A$ is exact on $D^{\delta}$ and there exists a function $\tilde \phi$ on $D^{\delta}$ such that $A+d\tilde\phi=0$ on $D^{\delta}$. 

\smallskip
We consider the restriction of $\tilde\phi$ to $D$ and extend it differentiably on $\Omega$ by using a partition of unity $(\chi_1,\chi_2)$ subordinated to $(D^{\delta},\Omega/D)$. Then, setting
$$
\phi\doteq\chi_1 \tilde\phi
$$ 
we see that $\phi$ is a smooth function on $\Omega$ which is equal to $\tilde\phi$ on $D$ so that, on $D$, one has $A+d\phi=0$.
We consider the new potential $\tilde A=A+d\phi$ and observe that $\tilde A=0$ on $D$.

\smallskip

Now consider an eigenfunction $f$ for the mixed problem on $D$ (Neumann boundary conditions on $\partial^{\rm ext}D$ and  Dirichlet boundary conditions on $\partial^{\rm int}D$), and  extend it by $0$ on $\Omega\setminus D$. As $\tilde A=0$ on $D$, we see that
$$
\abs{\nabla^{\tilde A}f}^2=\abs{\nabla f}^2,
$$ 
and we get a test function  having the same Rayleigh quotient as that of $f$. Thanks to the usual min-max characterization of the spectrum, we obtain
$$
\lambda_k(\Omega,A) = \lambda_k(\Omega,\tilde A)\le \nu_k(D)
$$
for all $k$.

\section{A lower bound for cylinders} \label{cylinder}

Recall that the scope is to show the estimate
\begin{equation}\label{estimate}
\lambda_1(\Omega,A)\geq\dfrac{4\pi^2}{KL^2}\cdot d(\Phi^A,{\bf Z})^2,
\end{equation}
for a Riemannian cylinder $\Omega$, and to show that equality holds if and only if $\Omega$ is a Riemannian product. The potential is a closed $1$-form $A$ and we assume
that $\Omega$ is $K$-foliated by the level curves of the smooth function $\psi\in{\cal F}_{\Omega}$. The most involved part of the proof is to show  that, in case of equality, the cylinder is a Riemannian product.


\subsection{Proof of the lower bound}  Fix a first eigenfunction $u$ associated to $\lambda_1(\Omega, A)$ and fix a level curve
$$
\Sigma_r=\{\psi=r\}, \quad\text{where $r\in [0,1]$.}
$$
As $\psi$ has no critical points, $\Sigma_r$ is isometric to $\sphere 1(\frac{L_r}{2\pi})$, where $L_r$ is the length of $\Sigma_r$.  The restriction of $A$ to $\Sigma_r$ is a closed $1$-form denoted by $\tilde A$; we use the restriction of $u$ to $\Sigma_r$ as a test-function for the first eigenvalue $\lambda_1(\Sigma_r,\tilde A)$ and obtain:
\begin{equation}\label{level}
\lambda_1(\Sigma_r,\tilde A)\int_{\Sigma_r}\abs{u}^2\leq\int_{\Sigma_r}\abs{\nabla^{\tilde A}u}^2.
\end{equation}
By the previous estimate on circles:
$$
\lambda_1(\Sigma_r,\tilde A)=\dfrac{4\pi^2}{L_r^2}d(\Phi^{\tilde A},{\bf Z})^2,
$$
where $\Phi^{\tilde A}$ is the flux of $\tilde A$ across $\Sigma_r$. Now note that $\Phi^{\tilde A}=\Phi^{A}$, because $\tilde A$ is the restriction of $A$ to $\Sigma_r$; moreover $L_r\leq L$ 
by the definition of $L$. Therefore:
\begin{equation}\label{llower}
\lambda_1(\Sigma_r,\tilde A)\geq \dfrac{4\pi^2}{L^2}d(\Phi^{ A},{\bf Z})^2
\end{equation}
for all $r$. Let $X$ be a unit vector tangent to $\Sigma_r$. Then:
$$
\begin{aligned}
\nabla^{\tilde A}_{X}u&=\nabla_{X}u-i\tilde A(X)u\\
&=\nabla_{X}u-iA(X)u\\
&=\nabla^A_{X}u.
\end{aligned}
$$
The consequence is that:
\begin{equation}\label{energy}
\abs{\nabla^{\tilde A}u}^2=\abs{\nabla^{\tilde A}_{X}u}^2=\abs{\nabla^{A}_{X}u}^2\leq \abs{\nabla^{A}u}^2.
\end{equation}
\nero {\it Note that equality holds in \eqref{energy} iff $\nabla^A_{N}u=0$ where $N$ is a unit vector normal to the level curve $\Sigma_r$ (we could take $N=\nabla\psi/\abs{\nabla\psi}$).}

\smallskip

For any fixed level curve $\Sigma_r=\{\psi=r\}$ we then have, taking into account \eqref{level}, \eqref{llower} and \eqref{energy}:
$$
\dfrac{4\pi^2}{L^2}d(\Phi^{ A},{\bf Z})^2\int_{\psi=r}\abs{u}^2\leq \int_{\psi=r}\abs{\nabla^Au}^2.
$$
Assume that $B_1\leq\abs{\nabla\psi}\leq B_2$ for positive constants $B_1,B_2$. Then the above inequality implies:
$$
\dfrac{4\pi^2}{L^2}d(\Phi^{ A},{\bf Z})^2\cdot B_1\int_{\psi=r}\dfrac{\abs{u}^2}{\abs{\nabla\psi}}\leq B_2\int_{\psi=r}\dfrac{\abs{\nabla^Au}^2}{\abs{\nabla\psi}}.
$$
We now integrate both sides from $r=0$ to $r=1$ and use the coarea formula. Conclude that
$$
\dfrac{4\pi^2}{L^2}d(\Phi^{ A},{\bf Z})^2\cdot B_1\int_{\Omega}{\abs{u}^2}\leq B_2\int_{\Omega}\abs{\nabla^Au}^2.
$$
As $u$ is a first eigenfunction, one has:
$$
\int_{\Omega}\abs{\nabla^Au}^2=\lambda_1(\Omega,A)\int_{\Omega}\abs{u}^2.
$$
Recalling that $K=\frac{B_2}{B_1}$ we finally obtain the estimate \eqref{estimate}.  


\subsection{Proof of the equality case}

It is clear that if $\Omega$ is a Riemannian product then we have equality. Now assume that we do have equality: we have to show that $\Omega$ is a Riemannian product.  Going back to the proof, we must have the following facts.

\medskip

{\bf F1.}  {\it All level curves of $\psi$ have the same length $L$}.

\medskip

{\bf F2.}  {\it $\abs{\nabla\psi}$ must be constant and, by renormalization, we can assume that it is everywhere equal to $1$. }Then, $\psi:\Omega\to [0,a]$ for some $a>0$ and we set
$$
N\doteq\nabla\psi.
$$

\medskip

{\bf F3.} {\it The eigenfunction $u$ on $\Omega$ restricts to an eigenfunction of the magnetic Laplacian of each level set $\Sigma_r=\{\psi=r\}$, with potential given by the restriction of $A$ to $\Sigma_r$.}

\medskip

{\bf F4.} {\it One has $\nabla^A_Nu=0$ identically on $\Omega$. }

\subsubsection{First step: description of the metric}

\begin{lemme} 
$\Omega$ is isometric to the product $[0,a]\times \sphere 1(\frac{L}{2\pi})$ with metric
$$
g=\twomatrix 100{\theta^2(r,t)}
$$
where $\theta(r,t)$ is positive and periodic of period $L$ in the variable $t$.
\end{lemme}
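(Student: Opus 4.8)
The plan is to use the foliation by level curves of $\psi$ as the basis for a coordinate system and then read off the metric from facts F1--F4. First I would fix the parametrization: by F2, $\abs{\nabla\psi}\equiv 1$, so $N=\nabla\psi$ is a unit normal field to the foliation, and the flow of $N$ gives a diffeomorphism of $\Omega$ with $[0,a]\times\Sigma_0$, where $r=\psi$ is the first coordinate and the second coordinate is obtained by flowing along $N$ from a fixed parametrization of $\Sigma_0=\{\psi=0\}$ by arclength. In these coordinates the curves $\{r=\const\}$ are exactly the level sets $\Sigma_r$, and the curves $\{t=\const\}$ are the integral curves of $N$, which are unit-speed geodesics orthogonal to every $\Sigma_r$ (this is the standard normal-exponential/Fermi picture, valid since $\psi$ has no critical points). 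Hence in the $(r,t)$ coordinates one has $g_{rr}=\abs{N}^2=1$ and $g_{rt}=\langle N,\partial_t\rangle=0$, because $\partial_t$ is tangent to the level curves while $N$ is normal; so the metric is block-diagonal of the form $g=dr^2+\theta^2(r,t)\,dt^2$ for some positive function $\theta$.

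Next I would pin down the normalization and periodicity of $\theta$. Since $\Sigma_0\cong\sphere 1(L/2\pi)$ (F1 says all level curves have length $L$, and each is a smooth closed connected curve by the cylinder hypothesis), we may choose the $t$-coordinate on $\Sigma_0$ to be arclength with $t\in[0,L]$ and endpoints identified; this forces $\theta(0,t)=1$ for all $t$ and makes $\theta$ periodic of period $L$ in $t$. The fact that $\theta(r,\cdot)$ stays periodic of the same period $L$ for every $r$ is precisely F1: the length of $\Sigma_r$ is $\int_0^L\theta(r,t)\,dt=L$, and more basically the coordinate $t$ is globally defined on all of $\Omega$ with the same period because it was transported by the flow of $N$, which is a diffeomorphism preserving the circle factor. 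So $\Omega$ is isometric to $[0,a]\times\sphere 1(L/2\pi)$ equipped with $g=\twomatrix 100{\theta^2(r,t)}$, which is the claim.

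The main obstacle is a regularity/globalness point rather than a hard computation: one must make sure the coordinate $(r,t)$ is genuinely global and smooth on all of $\Omega$, i.e.\ that flowing along $N=\nabla\psi$ from $\Sigma_0$ sweeps out $\Omega$ exactly once and reaches the other boundary component at time $a$. This follows because $\psi$ is a proper function on the compact manifold $\Omega$ with no interior critical points and is constant on each boundary component (so $N$ is transverse to $\bd\Omega$ and points from $\Sigma_1$ to $\Sigma_2$), hence $\psi$ itself serves as the flow parameter up to reparametrization and the gradient flow has no closed orbits or fixed points in the interior; a standard argument then shows $\psi^{-1}([0,a])=\Omega$ with $\psi^{-1}(0)=\Sigma_1$, $\psi^{-1}(a)=\Sigma_2$, and the map $[0,a]\times\Sigma_0\to\Omega$ is a diffeomorphism. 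Once this is in place, everything else is just the elementary observation that $dr^2$ is the $rr$-component and the mixed term vanishes by orthogonality. Note that at this stage we have \emph{not} yet used F3 and F4; those will be needed in the subsequent steps to show $\theta$ is independent of $r$, i.e.\ to upgrade this warped-product description to an actual Riemannian product.
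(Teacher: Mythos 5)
Your proof is correct and follows essentially the same route as the paper: set up coordinates with $\partial_r = N = \nabla\psi$, read off $g_{rr}=\abs{N}^2=1$ from F2 and $g_{rt}=0$ from the orthogonality of $\partial_t$ (tangent to the level curves) and $N$ (normal), and use F1 for periodicity. The only difference is cosmetic: the paper explicitly proves $\nabla_N N = 0$ (using symmetry of the Hessian of $\psi$) in order to identify $\psi$ with the distance function and use the normal exponential map $F(r,x)=\exp_x(rN)$, whereas you work directly with the gradient flow of $\psi$, which makes that geodesic computation unnecessary — you mention the geodesic property as standard but your argument never actually uses it.
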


We first show that the integral curves of $N$ are geodesics; for this it is enough to show that 
$
\nabla_NN=0
$ 
on $\Omega$. Let $e_1(x)$ be a vector tangent to the level curve of $\psi$ passing through $x$. Then, we obtain a smooth vector field $e_1$ which, together with $N$, forms a global orthonormal frame.  Now 
$$
\scal{\nabla_NN}{N}=\dfrac 12 N\cdot\scal{N}{N}=0.
$$
On the other hand, as the Hessian is a symmetric tensor:
$$
\scal{\nabla_NN}{e_1}=\nabla^2\psi(N,e_1)=\nabla^2\psi(e_1,N)=\scal{\nabla_{e_1}N}{N}=\dfrac 12e_1\cdot\scal{N}{N}=0.
$$
Hence $\nabla_NN=0$ as asserted. As each integral curve of $N=\nabla\psi$ is a geodesic meeting $\Sigma_1$ orthogonally, we see that $\psi$ is actually the distance function to $\Sigma_1$. We introduce coordinates on $\Omega$ as follows. For a fixed point $p\in\Omega$ consider the unique integral curve $\gamma$ of $N$ passing through $p$ and   let $x\in\Sigma_1$ be the intersection 
of $\gamma$ with $\Sigma_1$ (note that $x$ is the foot of the unique geodesic which minimizes the distance from $p$ to $\Sigma_1$). Let $r$ be the distance of $p$ to $\Sigma_1$. We then have a map
$
\Omega\to [0,a]\times\Sigma_1
$
which sends $p$ to $(r,x)$. Its  inverse is the map $F: [0,a]\times\Sigma_1\to\Omega$ defined by
$$
F(r,x)=\exp_x(rN).
$$
Note that $F$ is a diffeomeorphism; we call the pair $(r,x)$ the {\it normal coordinates} based on $\Sigma_1$. We introduce the arc-length $t$ on $\Sigma_1$ (with origin in any assigned point of $\Sigma_1$) and call $L$ the length of $\Sigma_1$. Then,  the pull-back metric on $[0,a]\times [0,L]$ takes the form:
$$
g=\twomatrix 100{\theta^2}
$$
for a positive smooth function $\theta=\theta(r,t)$ such that $\theta(r,0)=\theta(r,L)$.  In fact, since $N=\opd r$ one sees that
$g_{11}=1$ everywhere; for any fixed $r=r_0$ we have that $F(r_0,\cdot)$ maps $\Sigma_1$ diffeomorphically onto the level set $\{\psi=r_0\}$ so that $\opd r$ and $\opd t$ will be mapped onto orthogonal vectors, and indeed $g_{12}=0$.
Observe that the pair:
$$
N=\opd r, \quad e_1=\dfrac{1}{\theta} \opd t
$$
is  a global  orthonormal frame. Finally note that
$$
\theta(0,t)=1
$$
for all $t$, because $F(0,\cdot)$ is the identity. 


\subsubsection{Second step : Gauge invariance} 

\begin{lemme} Let $A=f(r,t)\,dr+h(r,t)\,dt$ be a closed $1$-form on $\Omega$. Then, there exists a smooth function $\phi$ on $\Omega$ such that
$$
A+d\phi=H(t)\,dt
$$
for a smooth function $H(t)$ depending only on t. Hence, by Gauge invariance, we can actually assume that $A=H(t)\,dt$.
\end{lemme}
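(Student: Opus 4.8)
The plan is to solve a first-order linear PDE for the gauge function $\phi$. Write $A = f(r,t)\,dr + h(r,t)\,dt$ in the normal coordinates $(r,t) \in [0,a]\times[0,L]$ constructed in the previous lemma. The condition $dA = 0$ reads $\partial_r h = \partial_t f$. I want to find $\phi$ with $\partial_r \phi = -f$ and $\partial_t \phi = H(t) - h$ for some function $H$ of $t$ alone; the point of subtracting $H(t)\,dt$ rather than asking for $A + d\phi = 0$ outright is that the $t$-period is topologically obstructed (the flux $\Phi^A$ need not vanish), so the best one can do is kill the $r$-component and make the $t$-component independent of $r$.

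First I would define $\phi$ by integrating in $r$: set
$$
\phi(r,t) = -\int_0^r f(s,t)\,ds + g(t)
$$
for a function $g(t)$ to be chosen. This is smooth on $[0,a]\times[0,L]$ and by construction $\partial_r \phi = -f$, so the $dr$-component of $A + d\phi$ vanishes. For the $dt$-component, compute
$$
\partial_t \phi(r,t) = -\int_0^r \partial_t f(s,t)\,ds + g'(t) = -\int_0^r \partial_r h(s,t)\,ds + g'(t) = -\big(h(r,t) - h(0,t)\big) + g'(t),
$$
where the middle equality is exactly the closedness relation $\partial_t f = \partial_r h$ and the last is the fundamental theorem of calculus in $s$. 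Hence the $dt$-component of $A + d\phi$ is $h(r,t) + \partial_t\phi(r,t) = h(0,t) + g'(t)$, which depends on $t$ only. So $A + d\phi = H(t)\,dt$ with $H(t) = h(0,t) + g'(t)$, and in fact $g \equiv 0$ already works, giving $H(t) = h(0,t)$.

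The one genuine issue is that $\phi$ must be a well-defined smooth function on $\Omega$, i.e. it must be $L$-periodic in $t$ (recall $\theta$ and hence all data are $L$-periodic), and likewise $H(t)\,dt$ must be a genuine $1$-form on the cylinder. Since $f$ is periodic in $t$, the integral $\int_0^r f(s,t)\,ds$ is periodic in $t$, so $\phi(r,0) = \phi(r,L)$ and $\phi$ descends to $\Omega$; and $H(t) = h(0,t)$ is periodic because $h$ is, so $H(t)\,dt$ is a bona fide closed $1$-form on $[0,a]\times\sphere 1(\tfrac{L}{2\pi})$. This settles the periodicity, which is the only place the topology of the cylinder enters, and there is no obstruction because we only required the $r$-dependence to disappear, not the form itself. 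The final sentence of the lemma is then immediate from Gauge invariance, Assertion (1) of Proposition \ref{basic facts}: $\Delta_A$ and $\Delta_{A+d\phi} = \Delta_{H(t)\,dt}$ are unitarily equivalent and have the same spectrum, so in all subsequent computations we may replace $A$ by $H(t)\,dt$.
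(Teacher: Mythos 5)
Your proof is correct and follows essentially the same route as the paper: define $\phi(r,t) = -\int_0^r f(s,t)\,ds$ so that the $dr$-component of $A+d\phi$ vanishes, and then use closedness of $A$ to conclude that the remaining $dt$-component is independent of $r$. The only differences are cosmetic — you make the resulting $H(t)=h(0,t)$ explicit and you spell out the $L$-periodicity check, which the paper leaves implicit.
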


\begin{proof} Consider the function
$$
\phi(r,t)=-\int_0^rf(x,t)\,dx.
$$
Then:
$$
A+d\phi=\tilde h(r,t)\,dt
$$
for some smooth function $\tilde h(r,t)$. 
As $A$ is closed, also $A+d\phi$ is closed, which implies that $\derive{\tilde h}{r}=0$, that is,
$
\tilde h(t,r)
$
does not depend on $r$; if we set $H(t)\doteq\tilde h(t,r)$ we get the assertion.
\end{proof}

We point out the following consequence. If $u=u(r,t)$ is an eigenfunction, we know from  {\bf F4} above that $\nabla^A_Nu=0$, where $N=\derive{}{r}$. As 
$$
\nabla^A_Nu=\derive ur-iA(\derive{}{r})u
$$
and $A=H(t)\,dt$ we obtain $A(\derive{}{r})=0$ hence
$
\derive ur=0
$
at all points of $\Omega$ and  
$$
u=u(t)
$$
 depends only on $t$. 


\subsubsection{Third step : the spectrum of a circle}  In this section, we give an expression for the eigenfunctions of the magnetic Laplacian on a circle with a Riemannian metric $g$ and a closed potential $A$. Of course, we know that any metric $g$ on a  circle is always isometric to the canonical metric $g_{\rm can}=\,dt^2$, where $t$ is arc-length. But our problem in this proof is to reconstruct the global metric of the cylinder and to show that it is a product, and we cannot suppose a priori that the restricted metric of each level set of $\psi$  is the canonical metric. The same is true for the restricted potential: we know that it is Gauge equivalent to a potential of the type $a\,dt$ for a scalar $a$, but we cannot suppose a priori that it is of that form.

We refer to Appendix \ref{riemannian circle} for the complete proof of the following fact.
\begin{prop}\label{circle} Let $(M,g)$ be the circle of length $L$ endowed with the metric
$
g=\theta(t)^2\,dt^2
$
where $t\in [0,L]$ and $\theta(t)$ is a positive function, periodic of period $L$. Let $A=H(t)\,dt$. Then, the eigenvalues of the magnetic Laplacian with potential $A$ are:
$$
\lambda_k(M,A)=\dfrac{4\pi^2}{L^2}(k-\Phi^A)^2, \quad k\in\bf Z
$$
with associated eigenfunctions
$$
u_k(t)=e^{i\phi(t)}e^{\frac{2\pi i (k-\Phi^A)}{L}s(t)}, \quad k\in\bf Z.
$$
where $\phi(t)=\int_0^tH(\tau)\,d\tau$ and $s(t)=\int_0^t\theta(\tau)\,d\tau$. 

\smallskip

In particular, if the metric is the canonical one, that is, $g=dt^2$, and the potential $1$-form is harmonic, so that $A=\frac{2\pi \Phi^A}{L}dt$, then the eigenfunctions are
simply :
$$
u_k(t)=e^{\frac{2\pi i k}{L}t}, \quad k\in\bf Z.
$$
\end{prop}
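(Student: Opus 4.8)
The plan is to reduce the eigenvalue problem on the Riemannian circle $(M,g)$ with $g=\theta(t)^2\,dt^2$ and potential $A=H(t)\,dt$ to the standard, canonical situation by two successive changes: first a gauge transformation to kill the potential locally, then an isometric change of variable to pass to arc-length. First I would introduce the primitive $\phi(t)=\int_0^t H(\tau)\,d\tau$, which is smooth on $[0,L]$ but in general \emph{not} periodic, since $\phi(L)=\int_0^L H = 2\pi\Phi^A$; thus $e^{i\phi(t)}$ is a well-defined function on the circle only up to the factor $e^{2\pi i\Phi^A}$. By Gauge invariance (Proposition \ref{basic facts}, part (1)), conjugating $\Delta_A$ by $e^{i\phi}$ replaces $A$ by $A-d\phi=0$, but at the price of turning the problem into one for the ordinary Laplacian $\Delta_g$ acting on functions $v$ that satisfy the \emph{twisted periodicity} $v(t+L)=e^{-2\pi i\Phi^A}v(t)$ (equivalently, sections of a flat line bundle with holonomy $e^{-2\pi i\Phi^A}$). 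The eigenfunctions $u$ of $\Delta_A$ correspond bijectively to such $v$ via $u=e^{i\phi}v$, with the same eigenvalue.

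Next I would pass to arc-length. Set $s(t)=\int_0^t\theta(\tau)\,d\tau$, a strictly increasing diffeomorphism from $[0,L]$ onto $[0,\ell]$ where $\ell=\int_0^L\theta$ is the total length of $(M,g)$; in the statement this length is called $L$, so I would either rename or note that the circle length in the $s$-coordinate equals the $L$ appearing in the formula. In the $s$-coordinate the metric becomes $ds^2$ and the Laplacian becomes $-\partial_s^2$. The eigenfunctions of $-\partial_s^2$ on $\mathbf R$ are $e^{i\mu s}$ with eigenvalue $\mu^2$, and the twisted periodicity $v(s+L)=e^{-2\pi i\Phi^A}v(s)$ forces $e^{i\mu L}=e^{-2\pi i\Phi^A}$, i.e. $\mu L\in -2\pi\Phi^A+2\pi\mathbf Z$, so $\mu=\frac{2\pi(k-\Phi^A)}{L}$ for $k\in\mathbf Z$. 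This yields the eigenvalues $\lambda_k=\mu^2=\frac{4\pi^2}{L^2}(k-\Phi^A)^2$ and, undoing the two substitutions, the eigenfunctions $u_k(t)=e^{i\phi(t)}e^{\frac{2\pi i(k-\Phi^A)}{L}s(t)}$, exactly as claimed. I should also check completeness — that these $u_k$ span $L^2$ — which follows because the $e^{i\mu s}$ with $\mu\in\frac{2\pi}{L}(\mathbf Z-\Phi^A)$ form an orthogonal basis of the space of twisted-periodic $L^2$ functions (it is the image of the standard Fourier basis under multiplication by $e^{-2\pi i\Phi^A s/L}$).

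I do not expect a serious obstacle: the argument is a clean two-step reduction. The one point requiring care is bookkeeping of the non-periodicity of $\phi$ and $s$ and making sure the Neumann-type boundary condition on the circle (there is no boundary, so self-adjointness is automatic) is correctly translated into the quasi-periodic boundary condition on $v$; equivalently, one must verify that $u=e^{i\phi}v$ is genuinely a single-valued function on $M$ precisely when $v$ has holonomy $e^{-2\pi i\Phi^A}$. The special case at the end (canonical metric $\theta\equiv 1$, harmonic potential $A=\frac{2\pi\Phi^A}{L}dt$) is then immediate: $s(t)=t$ and $\phi(t)=\frac{2\pi\Phi^A}{L}t$, so $u_k(t)=e^{\frac{2\pi i\Phi^A}{L}t}e^{\frac{2\pi i(k-\Phi^A)}{L}t}=e^{\frac{2\pi ik}{L}t}$. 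For the detailed computations I would refer, as the authors do, to Appendix \ref{riemannian circle}.
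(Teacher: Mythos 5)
Your proposal is correct, and it reaches the same eigenvalues and eigenfunctions, but it organizes the computation along a slightly different route than the paper. The paper first uses a \emph{periodic} gauge change: it seeks $\phi$ periodic with $A+d\phi=\omega$, where $\omega=c\,\theta\,dt$ is the \emph{harmonic} representative of the cohomology class of $A$ (with $c=\frac{2\pi}{L}\Phi^A$), and then computes the spectrum of $\Delta_\omega$ on genuine $L^2$-functions on the circle by writing out the second-order ODE, substituting $u=v\circ s$, and solving the resulting constant-coefficient equation $-v''+2ic\,v'+c^2v=\lambda v$ subject to ordinary $L$-periodicity of $v$. You instead gauge the potential away entirely, using the non-periodic primitive $\phi(t)=\int_0^tH$, and pay the price in the boundary condition: the transformed eigenfunctions $v$ of the free Laplacian $-\partial_s^2$ now live in the space of quasi-periodic functions $v(s+L)=e^{-2\pi i\Phi^A}v(s)$, i.e.\ sections of a flat line bundle with holonomy $e^{-2\pi i\Phi^A}$. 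The two viewpoints are unitarily equivalent (conjugation by $e^{-ics}$ carries one into the other), so both yield $\mu=\frac{2\pi(k-\Phi^A)}{L}$ and $\lambda_k=\frac{4\pi^2}{L^2}(k-\Phi^A)^2$. What the paper's approach buys is that it stays entirely within $L^2(M)$ with the usual periodic boundary condition and needs only elementary ODE manipulations; what your approach buys is conceptual transparency (the spectrum is visibly the free spectrum shifted by the holonomy, which is the Aharonov--Bohm picture), at the small cost of having to track carefully the twisted periodicity and verify that the multivalued $\phi$ produces a single-valued $u=e^{i\phi}v$. You handle this point correctly, and your completeness remark (the twisted exponentials are an orthogonal basis of the twisted $L^2$ space) supplies the same information as the paper's observation that the $v_k$ exhaust the periodic solutions. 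The only stylistic lapse is the closing reference to Appendix \ref{riemannian circle}, which is where the paper's own proof lives; in a blind write-up you should carry out the arc-length substitution $\Delta=-\partial_s^2$ and the quasi-periodicity bookkeeping explicitly rather than citing the appendix you are trying to reprove.
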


We remark that if the flux $\Phi^A$ is not congruent to $1/2$ modulo integers, then the eigenvalues are all simple. If the flux is congruent to $1/2$ modulo integers, then there are two consecutive integers $k,k+1$ such that
$$
\lambda_{k}=\lambda_{k+1}.
$$
Consequently, the lowest eigenvalue has multiplicity two, and the first eigenspace is spanned by
$$
e^{i\phi(t)}e^{\frac{\pi i}{L}s(t)}, \, e^{i\phi(t)}e^{-\frac{\pi i}{L}s(t)}.
$$


\subsubsection{Fourth step : a calculus lemma} In this section, we state a technical lemma which will allow us to conclude. The proof is conceptually simple, but perhaps tricky at some points; then,  we decided to put it in Appendix \ref{technical lemma}.

\begin{lemme} \label{calculus} Let $s:[0,a]\times [0,L]\to \reals$ be a smooth, non-negative function such that
$$
s(0,t)=t,\quad s(r,0)=0, \quad s(r,L)=L \quad\text{and}\quad \derive st(r,t)\doteq\theta(r,t)>0.
$$
Assume that there exist smooth functions $p(r),q(r)$ with $p(r)^2+q(r)^2$ not identically zero, such that
$$
p(r)\cos(\frac{\pi}{L}s(r,t))+q(r)\sin(\frac{\pi}{L}s(r,t))=F(t)
$$
where $F(t)$ depends only on $t$. Then $p$ and $q$ are constant and
$
\derive sr=0
$
so that 
$$s(r,t)=t
$$
 for all $(r,t)$.
\end{lemme}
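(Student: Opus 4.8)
The plan is to treat the hypothesis as an identity in the variable $r$ for each fixed $t$, and to extract from it enough rigidity to pin down $p$, $q$, and $s$. First I would exploit the three boundary/normalization conditions. Setting $t=0$ and $t=L$ in the identity gives $p(r)\cos 0 + q(r)\sin 0 = F(0)$, i.e. $p(r)=F(0)$ is constant, and $p(r)\cos(\pi) + q(r)\sin(\pi) = F(L)$, i.e. $-p(r)=F(L)$, so $F(L)=-F(0)$; in particular $p$ is already constant. Writing $p$ for this constant, the identity becomes $p\cos(\tfrac{\pi}{L}s(r,t)) + q(r)\sin(\tfrac{\pi}{L}s(r,t)) = F(t)$. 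To handle $q(r)$, I would differentiate in $r$: since $F(t)$ does not depend on $r$,
$$
\Big(-p\sin(\tfrac{\pi}{L}s) + q(r)\cos(\tfrac{\pi}{L}s)\Big)\cdot\tfrac{\pi}{L}\,\derive sr \;+\; q'(r)\sin(\tfrac{\pi}{L}s)=0.
$$

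Next I would evaluate this derived relation along $t=0$. There $s(r,0)=0$, so $\sin(\tfrac{\pi}{L}s)=0$ and $\cos(\tfrac{\pi}{L}s)=1$, and the relation collapses to $q(r)\cdot\tfrac{\pi}{L}\,\derive sr(r,0)=0$. Using $s(r,0)=0$ for all $r$ we already get $\derive sr(r,0)=0$, which makes this automatic, so instead I would use the normalization $s(0,t)=t$ to anchor things at $r=0$: there $p\cos(\tfrac{\pi}{L}t)+q(0)\sin(\tfrac{\pi}{L}t)=F(t)$, which identifies $F$ completely on $[0,L]$ as a pure sinusoid of frequency $\pi/L$ with coefficients $(p,q(0))$. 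Now the key step: for arbitrary $r$ the function $t\mapsto p\cos(\tfrac{\pi}{L}s(r,t))+q(r)\sin(\tfrac{\pi}{L}s(r,t))$ must equal this fixed sinusoid $F(t)=p\cos(\tfrac{\pi}{L}t)+q(0)\sin(\tfrac{\pi}{L}t)$. I would rewrite both sides in amplitude–phase form, $F(t)=R\cos(\tfrac{\pi}{L}t-\varphi)$ with $R=\sqrt{p^2+q(0)^2}$ (nonzero by the hypothesis that $p^2+q^2\not\equiv 0$, combined with the rigidity already obtained — I will need to check $R\neq0$; if $R=0$ then $F\equiv0$ and $p=q(0)=0$, and one argues separately that then $p(r)^2+q(r)^2\equiv0$, contradicting the hypothesis), and similarly the left side is $R(r)\cos(\tfrac{\pi}{L}s(r,t)-\varphi(r))$ with $R(r)=\sqrt{p^2+q(r)^2}$. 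Matching the two ranges over $t\in[0,L]$ forces $R(r)=R$, hence $q(r)^2=q(0)^2$, and then, since $s(r,\cdot)$ is continuous and $s(r,0)=0$, $s(r,L)=L$, the map $t\mapsto \tfrac{\pi}{L}s(r,t)-\varphi(r)$ must sweep the same arc as $t\mapsto\tfrac{\pi}{L}t-\varphi$; using monotonicity of $s$ in $t$ (from $\theta>0$) and the matching endpoints, I would conclude $s(r,t)=t$ and $\varphi(r)=\varphi$, whence $q(r)=q(0)$ is constant and $\derive sr\equiv0$.

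I expect the main obstacle to be making the amplitude–phase matching argument rigorous without circularity: a priori $s(r,t)$ need not stay in $[0,L]$ (it is only assumed non-negative with the stated boundary values), so $\tfrac{\pi}{L}s(r,t)$ could in principle wind past $\pi$, and I must rule that out before invoking injectivity of $\cos$ on a suitable interval. The clean way is to first observe that, by continuity in $r$ and the identity $p\cos(\tfrac{\pi}{L}s)+q(r)\sin(\tfrac{\pi}{L}s)=F(t)$ with $F$ having image exactly $[-R,R]$, the quantity $\cos(\tfrac{\pi}{L}s(r,t)-\varphi(r))$ is pinned to $F(t)/R(r)$; combined with $s(r,0)=0$, $s(r,L)=L$ and strict monotonicity in $t$, a connectedness/degree argument forces $\tfrac{\pi}{L}s(r,t)\in[0,\pi]$ and then equality $s(r,t)=t$ follows. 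The remaining bookkeeping — that $p,q$ are then constant and $\derive sr=0$ — is immediate. An alternative, possibly slicker route that avoids range issues altogether is to differentiate the identity twice in $t$: from $p\cos(\tfrac\pi L s)+q(r)\sin(\tfrac\pi L s)=F(t)$ one gets, after dividing by the common factors, that $\derive{^2 s}{t^2}$ and $(\derive st)^2$ satisfy a linear ODE system whose coefficients are rigid; I would keep this as a backup in case the geometric matching argument gets unwieldy.
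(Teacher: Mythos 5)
Your route is genuinely different from the paper's, and the first step is actually cleaner: setting $t=0$ in the identity gives $p(r)=F(0)$ immediately, whereas the paper first differentiates in $t$, inverts a $2\times2$ system to isolate $p(r)$, and only then sets $t=0$. After that the two proofs diverge. The paper works \emph{locally}: from $F'(0)=\pi q(r)\theta(r,0)$ and $F'(L)=-\pi q(r)\theta(r,L)$ it deduces (when $q\not\equiv0$) a zero $t_0\in(0,L)$ of $F'$, uses the isolated relation $p=F(t_0)\cos(\frac{\pi}{L}s(r,t_0))$ together with $0<s(r,t_0)<L$ to force $\partial s/\partial r(\cdot,t_0)=0$, and from there gets $q'\equiv0$ and finally $\partial s/\partial r\equiv0$. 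You instead work \emph{globally} by matching ranges over $t\in[0,L]$. That is an appealing alternative, but as written it does not yet close.

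Concretely: (i) The assertion that $F$ has image exactly $[-R,R]$ is false; with $p=q(0)=1$, $L=1$ one has $F(t)=\sqrt2\cos(\pi t-\frac{\pi}{4})$ with image $[-1,\sqrt2]$. You only invoke this to argue $\frac{\pi}{L}s(r,t)\in[0,\pi]$, but that worry is unfounded anyway: strict monotonicity of $s(r,\cdot)$ with $s(r,0)=0$, $s(r,L)=L$ already forces $s(r,t)\in[0,L]$. (ii) The range-matching step needs an explicit sign analysis you do not carry out. Since $s(r,\cdot)$ is a bijection of $[0,L]$, the range in question is that of $\theta\mapsto p\cos\theta+q(r)\sin\theta$ over $\theta\in[0,\pi]$, which equals $[-|p|,\sqrt{p^2+q^2}]$ if $q>0$, $[-\sqrt{p^2+q^2},|p|]$ if $q<0$, and $[-|p|,|p|]$ if $q=0$. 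Matching this to the $r=0$ range does yield $q(r)=q(0)$, but the one-line ``$R(r)=R$'' gives only $q(r)^2=q(0)^2$, after which you must still invoke continuity of $q$ to rule out a sign flip. (iii) Once $p,q$ are constant, $G(\theta)=p\cos\theta+q\sin\theta$ is in general \emph{not} injective on $[0,\pi]$, so $G(\frac{\pi}{L}s(r,t))=G(\frac{\pi}{L}t)$ does not immediately give $s(r,t)=t$. One needs to observe that the level set $\{G(\theta)=G(\theta')\}$ in $[0,\pi]^2$ is the diagonal together with a strictly decreasing branch, and that the strictly increasing curve $t\mapsto(\frac{\pi}{L}s(r,t),\frac{\pi}{L}t)$ from $(0,0)$ to $(\pi,\pi)$ therefore has to stay on the diagonal. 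None of these gaps is fatal, but all three must be repaired before the argument is a proof; the paper's local approach sidesteps them entirely.
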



\subsubsection{End of proof of the equality case} Assume that equality holds. Then, if $u$ is an eigenfunction, we know that $u=u(t)$ and $u$ restricts to an eigenfunction on each level circle $\Sigma_r$ for the potential  $A=H(t)\,dt$ above (see {\bf F3}  and the second step above). 

\medskip
We assume that $\Phi^A$ is congruent to $\frac 12$ modulo integers. This is the most difficult case; in the other cases the proof is a particular case of this, it is simpler and we omit it.


\medskip

Recall that each level set $\Sigma_r$ is a circle of length $L$ for all $r$, with metric $g=\theta(r,t)^2\,dt$. As the flux of $A$ is congruent to $\frac 12$ modulo integers, we see that there exist complex-valued functions $w_1(r),w_2(r)$ such that
$$
u(t)=e^{i\phi(t)}\Big(w_1(r)e^{\frac{\pi i}{L}  s(r,t)}+w_2(r)e^{-\frac{\pi i}{L}  s(r,t)}\Big),
$$
which, setting $f(t)=e^{-i\phi(t)}u(t)$,  we can re-write
\begin{equation}\label{rewrite}
f(t)=w_1(r)e^{\frac{\pi i}{L}  s(r,t)}+w_2(r)e^{-\frac{\pi i}{L} s(r,t)}.
\end{equation}
Recall that here $\phi(t)=\int_0^tH(\tau)\,d\tau$ and
$$
s(r,t)=\int_0^t\theta(r,\tau)\,d\tau.
$$
We take the real part on both sides of \eqref{rewrite} and obtain smooth real-valued functions $F(t), p(r),q(r)$ such that 
$$
F(t)=p(r)\cos({\frac{\pi}{L}}s(r,t))+q(r)\sin(\frac{\pi}{L} s(r,t)).
$$
Since $\theta(0,t)=1$ for all $t$, we see
$$
s(0,t)=t.
$$
Clearly $s(r,0)=0$; finally, $s(r,L)=\int_0^L\theta(r,\tau)\,d\tau=L$, being the length of the level circle $\Sigma_r$. Thus, we can apply Lemma \ref{calculus} and conclude that $s(r,t)=t$ for all $t$, that is,
$$
\theta(r,t)=1
$$
for all $(r,t)$ and the metric is a Riemannian product. 

It might happen that $p(r)=q(r)\equiv 0$. But then the real part of $f(t)$ is zero and we can work in an analogous way with the imaginary part of $f(t)$, which cannot vanish unless $u\equiv 0$. 



\subsection{General estimate of $K_{\Omega,\psi}$} \label{estimate K}

To estimate $K_{\Omega,\psi}$ for a cylinder $\Omega=[0,a]\times\sphere 1$ we can use the explicit expression of the metric in the normal coordinates $(r,t)$, where $t\in [0,2\pi]$ is arc-length :
$$
g=
\left(
    \begin{array}{cc}
	g_{11} &  g_{12}  \\
	 g_{21} & g_{22}
	 \end{array}
		 \right).
$$
If $g^{ij}$ is the inverse matrix of $g_{ij}$, and if $\psi=\psi(r,t)$ one has:
$$
\abs{\nabla\psi}^2=g^{11}\Big(\derive{\psi}{r}\Big)^2+2g^{12}\derive{\psi}r\derive{\psi}{t}+
g^{22}\Big(\derive{\psi}{t}\Big)^2.
$$
The function $\psi(r,t)=r$ belongs to ${\cal F}_{\Omega}$ and one has:
$
\abs{\nabla\psi}^2=g^{11},
$
which immediately implies that we can take
$$
K_{\Omega,\psi}\leq \dfrac{\sup_{\Omega}g^{11}}{\inf_{\Omega}g^{11}}.
$$
Note in particular that if $\Omega$ is rotationally invariant, so that the metric writes as
$$
g=\left(
    \begin{array}{cc}
	1 &  0 \\
	0 & \theta(r)
	 \end{array}
		 \right),
$$
 then $K_{\Omega,\psi}=1$. The estimate becomes
\begin{equation}\label{simple}
\lambda_1(\Omega,A)\geq\dfrac{4\pi^2}{L^2}\cdot d(\Phi^A,{\bf Z})^2,
\end{equation}
where $L$ is the maximum length of a level curve $r={\rm const}$.

\begin{ex}
{\rm Yet more generally, one can fix a smooth closed simple curve $\gamma$ on a Riemann surface $M$ and consider the tube of radius $R$ around $\gamma$:
$$
\Omega=\{x\in M: d(x,\gamma)\leq R\}.
$$
It is well-known that if $R$ is sufficiently small then $\Omega$ is a cylinder with smooth boundary which can be foliated by the level sets of $\psi$, the distance  function to $\gamma$.
Clearly $\abs{\nabla\psi}=1$ and \eqref{simple} holds as well. 

A concrete example where we  could estimate the width $R$ is the case of a compact surface $M$ of genus $\ge 2$ and curvature $-a^2\le K\le -b^2$, $a \ge b >0$. If we consider a simple closed geodesic $\gamma$, it is known that it has a neighborhood of size $C(\gamma,a)$ (depending on $a$ and of the length of $\gamma$) which is diffeomorphic to the product $S^1 \times (-1,1)$ (see for example [CF]). Then we can take as Riemannian cylinder $\Omega$ a cylinder having one boundary component equal to $\gamma$  and of length $L <C(\gamma,a)$. Because of the Gauss-Bonnet Theorem, we can foliate $\Omega$  with the  level sets of the distance function to $\gamma$; then $K=1$ and \eqref{simple} holds  with $L$ given by the length of the other boundary component.}
 
\end{ex}


\subsection{Proof of Theorem \ref{main2}: plane annuli} \label{convex}
In this section, we study a situation where we can construct a foliation with a good estimate of the constant $K$ in terms of the geometry. As a consequence, we get a lower bound for $\lambda_1(\Omega,A)$ depending on natural geometric invariants associated to the distance.

The set-up is the following: $\Omega$ is a smooth, doubly connected domain in the plane, bounded by the outer curve $\Sigma_2$ and the inner curve $\Sigma_1$, both convex. Assume that $A$ is a closed potential having flux $\Phi^A$ around $\Sigma_1$. Then, the goal is to prove the estimate
$$
\lambda_1(\Omega,A)\geq \dfrac{4\pi^2\beta^2}{B^2L^2} d(\Phi^A,{\bf Z})^2
$$
where $\beta$ (resp. $B$) is the minimum (resp. maximum) distance of $\Sigma_1$ to $\Sigma_2$ and $L$ is the length of the outer boundary.  

\medskip
Before giving the proof, let us discuss briefly the result. It is clear that, in order to have a lower bound, we need to control the length $L$ and the maximal distance $B$ between the two boundaries : it suffices to think to an annulus in the plane with a small inner circle and a large outer circle: as it contains a big disk, the upper bound we got in Proposition \ref{upperharmonic} implies that the first eigenvalue of the domain (for a closed potential) is bounded from above by the first eigenvalue of the Dirichlet problem of the disk, which is known to tend to $0$ as the radius of the disk tends to $\infty$. 

It is less obvious that it is also necessary to control the minimum distance $\beta$ between the two boundaries, and we will construct a specific example (Example \ref{example1} below) which will show this fact. We will also explain why we need  convexity: with a local and arbitrarily small (but not convex) deformation, we can completely perturb the spectrum. Note, also, that we cannot hope to control the spectrum simply by imposing a bound on the curvature of the boundary $\Sigma_1 \cup \Sigma_2$ (see Example \ref{example2} below).

 \medskip
Now, let us begin with the proof. We construct a suitable smooth function $\psi$ and estimate the constant $K$ with respect to the geometry of $\Omega$.
From each point $x\in\Sigma_1$, consider the ray $\gamma_x(t)=x+tN_x$, where $N_x$ is the exterior normal to $\Sigma_1$ at $x$ and $t\geq 0$.  
The  ray meets $\Sigma_2$ at a unique point $Q(x)$, and we let
$$
r(x)=d(x,Q(x)).
$$
Then we define $\psi$ as follows:
$$
\psi=\threesystem
{0\quad\text{on}\quad\Sigma_1}
{1\quad\text{on}\quad\Sigma_2}
{\text{linear on each ray from $\Sigma_1$ to $\Sigma_2$}}
$$
\begin{prop} \label{estimate doubly convex}Let
$$
\twosystem
{\beta=\min\{r(x): x\in\Sigma_1\}}
{B=\max\{r(x): x\in\Sigma_1\}}
$$
Then, at all points of $\Omega$ one has:
$$
\dfrac{1}{B}\leq\abs{\nabla\psi}\leq\dfrac{B}{\beta^2}.
$$
In particular:
$$
K_{\Omega,\psi}=\dfrac{\sup_{\Omega}\abs{\nabla\psi}}{\inf_{\Omega}\abs{\nabla\psi}}\leq\dfrac{B^2}{\beta^2}.
$$
\end{prop}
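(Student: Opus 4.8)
The plan is to realise $\bar\Omega$ as a normal graph over $\Sigma_1$ along the defining rays, to compute $\abs{\nabla\psi}$ in the associated coordinates, and then to reduce the two pointwise inequalities to elementary facts about the two convex curves. Write $K_1\subset\mathrm{int}\,K_2$ for the convex bodies bounded by $\Sigma_1$ and $\Sigma_2$. Convexity of $\Sigma_1$ makes the outward normal rays pairwise disjoint and makes $x$ the nearest point of $K_1$ to each point of its ray; convexity of $\Sigma_2$ makes each ray, starting at the interior point $x$, cross $\Sigma_2$ exactly once. Hence $F:\Sigma_1\times[0,1]\to\bar\Omega$, $F(x,s)=x+s\,r(x)N_x$, is a diffeomorphism foliating $\bar\Omega$, and a short argument (pass through the nearest point of $\Sigma_1$ to a given point of $\Sigma_2$) gives $d(p,\Sigma_2)\ge\beta$ for every $p\in\Sigma_1$.

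First I would compute the pulled-back metric in coordinates $(u,s)$, $u$ being arc length on $\Sigma_1$ and $s=\psi$. With the Frenet relations $x'=T$, $N'=\kappa T$ ($\kappa\ge 0$ the curvature of $\Sigma_1$), one has $\bd_uF=(1+sr\kappa)T+sr'N$ and $\bd_sF=rN$, so $\det g=r^2(1+sr\kappa)^2$ and, since $\psi=s$,
\[
\abs{\nabla\psi}^2=g^{ss}=\frac{1}{r(u)^2}\Big(1+\frac{s^2(r'(u))^2}{(1+sr(u)\kappa(u))^2}\Big).
\]
The lower bound is immediate: dropping the non-negative second term, $\abs{\nabla\psi}^2\ge 1/r(u)^2\ge 1/B^2$. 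For the upper bound, $s\mapsto s/(1+sr\kappa)$ is increasing on $[0,1]$, so it is enough to bound the bracket at $s=1$; explicitly we must show
\[
\frac{(r'(u))^2}{(1+r(u)\kappa(u))^2}\le\frac{B^2}{\beta^2}-1,
\]
after which $\abs{\nabla\psi}^2\le\frac{1}{\beta^2}\cdot\frac{B^2}{\beta^2}$, i.e. $\abs{\nabla\psi}\le B/\beta^2$, and then $K_{\Omega,\psi}=\sup_\Omega\abs{\nabla\psi}/\inf_\Omega\abs{\nabla\psi}\le B^2/\beta^2$.

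The core is this last inequality. Differentiating $Q(u)=x(u)+r(u)N(u)$ gives $Q'(u)=(1+r\kappa)T+r'N$; as $Q'(u)$ is tangent to $\Sigma_2$ at $Q(u)$ and $1+r\kappa=\scal{Q'}{T}>0$, one gets $\abs{Q'}/(1+r\kappa)=1/\cos\alpha(u)$, where $\alpha(u)\in[0,\pi/2)$ is the angle between the ray direction $N(u)$ and the outward normal $\nu(u)$ of $\Sigma_2$ at $Q(u)$ ($\scal{N}{\nu}\ge 0$ because the tangent line of $\Sigma_2$ at $Q(u)$ supports $K_2\ni x(u)$). Hence $\frac{(r')^2}{(1+r\kappa)^2}=\frac{\abs{Q'}^2}{(1+r\kappa)^2}-1=\cos^{-2}\alpha(u)-1$, and what is needed becomes $\cos\alpha(u)\ge\beta/B$. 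Now $r(u)\cos\alpha(u)=\scal{Q(u)-x(u)}{\nu(u)}=h_{K_2}(\nu(u))-\scal{x(u)}{\nu(u)}\ge h_{K_2}(\nu(u))-h_{K_1}(\nu(u))$, with $h_{K_i}$ the support function; so it suffices that $h_{K_2}(\nu)-h_{K_1}(\nu)\ge\beta$ for every unit vector $\nu$. For this, let $p$ be a support point of $K_1$ in direction $\nu$: the point $q=p+\big(h_{K_2}(\nu)-h_{K_1}(\nu)\big)\nu$ lies on the supporting line of $K_2$ in direction $\nu$, hence outside $\mathrm{int}\,K_2$, so the segment $[p,q]$ (from $p\in\mathrm{int}\,K_2$ to $q\notin\mathrm{int}\,K_2$) meets $\Sigma_2$, whence $\beta\le d(p,\Sigma_2)\le\abs{p-q}=h_{K_2}(\nu)-h_{K_1}(\nu)$. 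Therefore $r(u)\cos\alpha(u)\ge\beta$, i.e. $\cos\alpha(u)\ge\beta/r(u)\ge\beta/B$.

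The metric computation and the monotonicity step are routine. The delicate point is the support-function inequality $h_{K_2}(\nu)-h_{K_1}(\nu)\ge\beta$ and, more broadly, keeping careful track of where convexity of \emph{each} curve is really used — for the foliation and the nearest-point identities on $\Sigma_1$, and for the supporting-line estimate on $\Sigma_2$. This is precisely why convexity cannot be dropped: a small non-convex dent in either curve breaks the relation between $r'$, the exit angle $\alpha$, and the distances $\beta,B$.
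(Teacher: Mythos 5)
Your proof is correct and rests on the same geometric ideas as the paper's, differing mainly in the choice of coordinates and the algebraic packaging. You take the normalized parameter $s=\psi$ as a coordinate, so that $\abs{\nabla\psi}^2 = g^{ss}$ falls out directly; you then observe that the bracket in $g^{ss}$ is increasing in $s$ (because $s\mapsto s/(1+sr\kappa)$ is increasing) and estimate it at $s=1$. The paper instead uses the actual arc length $t$ along the normal ray, gets a diagonal metric, splits $\nabla\psi$ into radial and tangential parts, and proves that $\cos\theta_x(t)$ is non-increasing — which is exactly your monotonicity step in different clothing. Your estimate of the exit angle via support functions,
$$
r\cos\alpha \;=\; h_{K_2}(\nu)-\scal{x}{\nu}\;\geq\; h_{K_2}(\nu)-h_{K_1}(\nu)\;\geq\;\beta,
$$
coincides with the paper's Step~1: there the distance $d(x,H(x))$ from $x$ to the tangent line at $Q(x)$ is precisely $h_{K_2}(\nu)-\scal{x}{\nu}$, and the inequality $d(x,H(x))\ge\beta$ is obtained by the same crossing-segment argument you give for $h_{K_2}-h_{K_1}\ge\beta$. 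You correctly isolate where each convexity hypothesis is used: $\kappa\ge 0$ on $\Sigma_1$ for the foliation and the sign in the monotonicity step, and the supporting-line property of $\Sigma_2$ for the angle bound. Net effect: your version is marginally more streamlined because the normalized parametrization makes $\abs{\nabla\psi}^2$ a single clean expression, at the small cost of a non-diagonal metric; the paper's $(t,s)$ coordinates give a diagonal metric but require the auxiliary notion of radial gradient. The two arguments are otherwise the same.
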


We remark that, as the shortest segment joining $\Sigma_1$ to $\Sigma_2$ meets both curves orthogonally, we also have:
$$
\beta=d(\Sigma_1,\Sigma_2).
$$

{\bf Proof of Theorem \ref{main2}}.  It is an  immediate consequence of the main estimate for cylinders and Proposition \ref{estimate doubly convex}. 
\smallskip
\noindent
Then, it remains to prove  Proposition \ref{estimate doubly convex}: we state its main steps as follows.

\smallskip

{\bf Step 1.} {\it Given the ray $R_x$ joining $x\in\Sigma_1$ to $Q(x)\in\Sigma_2$, let $\theta_x$ be the angle between $R_x$ and the outer normal to $\Sigma_2$ at $Q(x)$. Then:
$$
\cos\theta_x\geq\dfrac{\beta}{B}.
$$}
 \smallskip
 
 {\bf Step 2.} {\it On the ray $R_x$ joining $x$ to $Q(x)$, consider the point $Q_t(x)$ at distance $t$ from $x$, and let $\theta_x(t)$  be the angle between $R_x$ and $\nabla\psi(Q_t(x))$. Then the function
$$
h(t)=\cos(\theta_x(t))
$$
is non-increasing in $t$. As $\theta_x(r(x))=\theta_x$ we have in particular:
$$
\cos(\theta_x(t))\geq \cos(\theta_x)\geq\dfrac{\beta}{B}
$$
for all $t\in [0,r(x)]$.}

\medskip

We will prove Step 1 and Step 2 below. 

\smallskip

{\bf Proof of Proposition \ref{estimate doubly convex}}. At any point of $\Omega$, let $\nabla^R\psi$ denote the radial part of $\nabla\psi$, which is the gradient of the restriction of $\psi$ to the ray passing through the given point. As such restriction is a linear function, one sees that
$$
\dfrac{1}{B}\leq\abs{\nabla^R\psi}\leq \dfrac{1}{\beta}.
$$
Since $\abs{\nabla\psi}\geq\abs{\nabla^R\psi}$ one gets immediately
$$
\abs{\nabla\psi}\geq\dfrac{1}{B}.
$$
Note that  $\theta_x(t)$, as defined in Step 2, is precisely the angle between $\nabla\psi$ and $\nabla^R\psi$, so that

$$
\abs{\nabla^R\psi}=\abs{\nabla\psi}\cos\theta_x(t),
$$
hence, by Step 2:
$$
\abs{\nabla^R\psi}\geq \frac{\beta}{B}
\abs{\nabla\psi}
$$
and then
$$
\abs{\nabla\psi}\leq \frac{B}{\beta^2}
$$
as asserted. 

\medskip

{\bf Proof of Step 1.} Let  $T_x$ be  the tangent line to $\Sigma_2$ at $Q(x)$ and $H(x)$  the point of $T_x$ closest to $x$. As $\Sigma_2$ is convex, $H(x)$ is outside $\Sigma_2$, hence
$$
d(x,H(x))\geq\beta.
$$
The triangle formed by $x, Q(x)$ and $H(x)$ is rectangle in $H(x)$, then we have:
$$
r(x)\cos\theta_x=d(x,H(x)).
$$
As $r(x)\leq B$ we conclude:
$$
B \cos\theta_x\geq \beta,
$$
which gives the assertion. \quad

\medskip

\textbf{Proof of Step 2.} We use a suitable parametrization of $\Omega$. Let us start from a parametrization
$\gamma:[0,L]\to \Sigma_1$ by arc-length $s$ with origin at a given point in $\Sigma_1$ and let $N(s)$ be the outer normal vector to $\Sigma_1$ at the point $\gamma(s)$.  Consider the set:
$$
\tilde\Omega=\{(t,s)\in [0,\infty)\times [0,L): t\leq \rho(s)\}
$$
where we have set $\rho(s)=r(\gamma(s))$. 
Introduce the diffeomorphism
$
\Phi:\tilde\Omega\to \Omega
$
defined by
$$
\Phi(t,s)=\gamma(s)+tN(s).
$$
Let us compute the metric tensor in the coordinates $(t,s)$. Write $\gamma'(s)=T(s)$ for the unit tangent vector to $\gamma$ and observe that, as $N(s)$ points outside $\Sigma_1$, one has $N'(s)=k(s)T(s)$, where $k(s)$ is the curvature of $\Sigma_1$, which, by the choices we made, is everywhere non-negative because $\Sigma_1$ is convex. Then:
$$
\twosystem
{d\Phi(\dfrac{\bd}{\bd t})=N(s)}
{d\Phi(\dfrac{\bd}{\bd s})=(1+tk(s))T(s)}
$$
If we set $\theta(t,s)=1+t k(s)$ the metric tensor is:
$$
g=\twomatrix{1}{0}{0}{\theta^2}
$$
and an orthonormal basis is then $(e_1,e_2)$, where
$$
e_1=\dfrac{\bd}{\bd t}, \quad e_2=\dfrac{1}{\theta}\dfrac{\bd}{\bd s}.
$$
In these coordinates, our function $\psi$ is written:
$$
\psi(t,s)=\dfrac{t}{\rho(s)}.
$$
Now
$$
\twosystem
{\scal{\nabla\psi}{e_1}=\derive{\psi}{t}=\dfrac{1}{\rho(s)}}
{\scal{\nabla\psi}{e_2}=\dfrac{1}{\theta}\derive{\psi}{s}=-\dfrac{t\rho'(s)}{\theta(t,s)\rho(s)^2}}.
$$
It follows that
$$
\abs{\nabla\psi}^2=\dfrac{1}{\rho^2}+\dfrac{t^2\rho'^2}{\theta^2\rho^4}=
\dfrac{\theta^2\rho^2+t^2\rho'^2}{\theta^2\rho^4}.
$$
Recall the radial gradient, which is the orthogonal projection of $\nabla\psi$ on the ray, whose direction is given by $e_1$. If we fix $x\in\Sigma_1$, we have
$$
\theta_x(t)=\text{angle between $\nabla\psi$ and $e_1$}
$$
and we have to study
$$
f(t)=\cos\theta_x(t)=\dfrac{\scal{\nabla\psi}{e_1}}{\abs{\nabla\psi}}=\dfrac{1}{\rho(s)\abs{\nabla\psi}}
$$
for a fixed $s$.  From the above expression of $\abs{\nabla\psi}$ and a suitable manipulation we see
$$
f(t)^2=\dfrac{\theta^2}{\theta^2+t^2g^2}
$$
where $g=\rho'(s)/\rho(s)$. Now
$$
\begin{aligned}
\dfrac{d}{dt}\dfrac{\theta^2}{\theta^2+t^2g^2}&=\dfrac{2t\theta g^2}{(\theta^2+t^2g^2)^2}
(t\derive{\theta}{t}-\theta)\\
\end{aligned}
$$
As $\theta(t,s)=1+tk(s)$ one sees that $t\derive{\theta}{t}-\theta=-1$ hence
$$
\dfrac{d}{dt}f(t)^2=-\dfrac{2t\theta g^2}{(\theta^2+t^2g^2)^2}\leq 0
$$
Hence $f(t)^2$ is non-increasing and, as $f(t)$ is positive, it is itself non-increasing. \qed


\begin{ex} \label{example1} \rm We use the upper bound obtained in Proposition \ref{upperharmonic}, 
to show that in the lower bound of Theorem \ref{main2}, we have to take into account  the distance between the two boundaries.  
Fix  the rectangles :
$$
R_2 = [-4,4]\times [0,4], \quad R_{1,\epsilon}=[-3,3]\times [\epsilon,2]
$$ 
and consider the region $\Omega_{\epsilon}$ given by the closure of $R_2\setminus R_{1,\epsilon}$. Note that $\Omega_{\epsilon}$ is a planar annulus whose boundary components are convex and get closer and closer as $\epsilon\to 0$. 

\begin{figure}[h]\centering
\includegraphics[width=40mm]{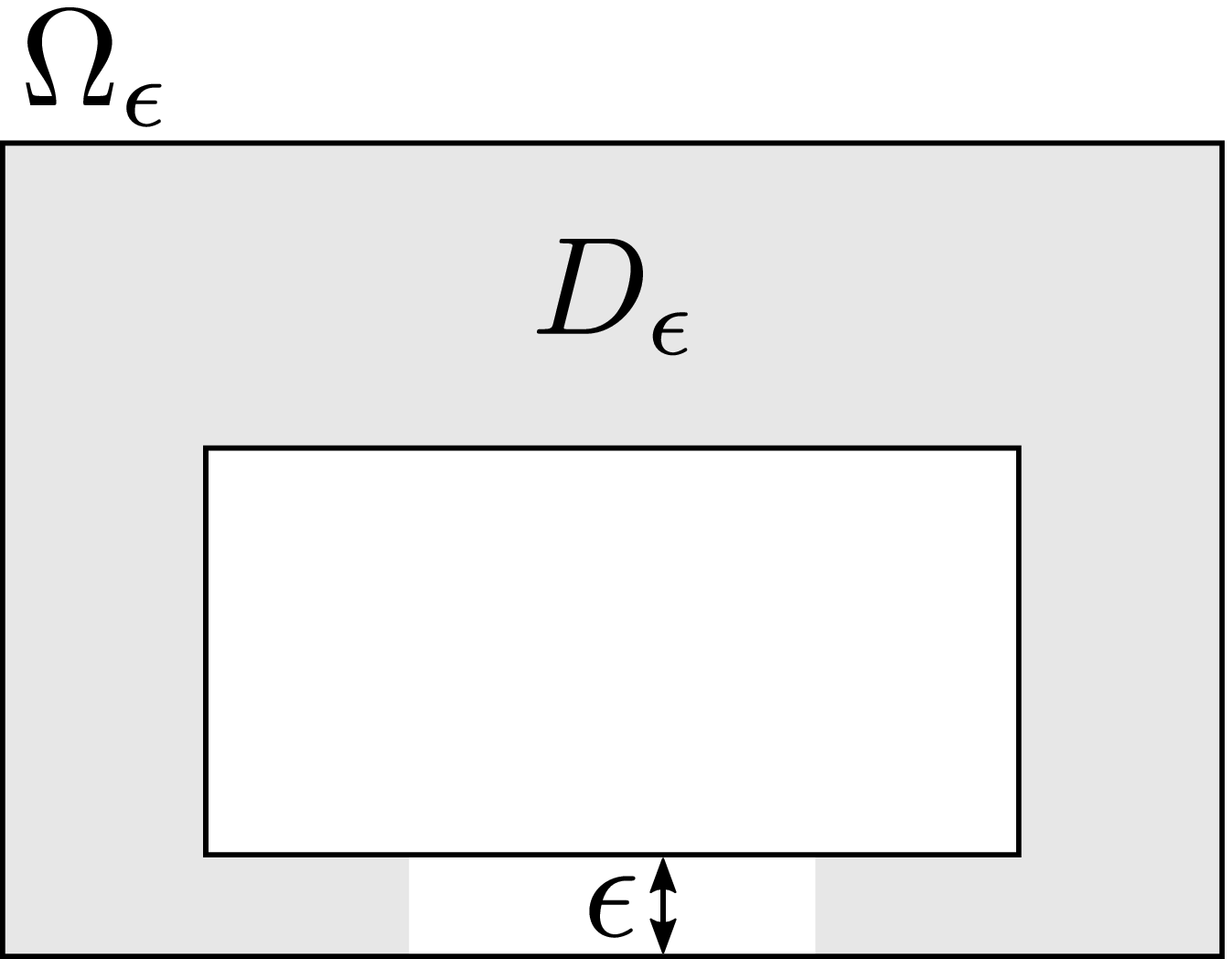}
\caption{$\lambda_1 \to 0$ as $\epsilon \to 0$}
\end{figure}

 We show that, for any closed potential $A$ one has:
\begin{equation}\label{small}
\lim_{\epsilon\to 0}\lambda_1(\Omega_{\epsilon},A)=0.
\end{equation}
Consider the simply connected region $D_{\epsilon}\subset\Omega_{\epsilon}$ given by the complement of the rectangle $[-1,1]\times [0,\epsilon]$. Now $D_{\epsilon}$ has trivial $1$-cohomology; by Proposition \ref{upperharmonic}, to show \eqref{small} it is enough to show that
\begin{equation}\label{enough}
\lim_{\epsilon\to 0}\nu_1(D_{\epsilon})=0.
\end{equation}
By the min-max principle :
$$
\nu_1(D_{\epsilon})=\inf\Big\{ \frac{\int_{D_{\epsilon}}\vert \nabla f\vert^2}{\int_{D_{\epsilon}}f^2} : f=0\,\,\text{on}\,\, \bd D_{\epsilon}^{\rm int} \Big\}
$$
where 
$$
\bd D_{\epsilon}^{\rm int} =\{(x,y)\in\Omega_{\epsilon}:x=\pm 1, y\in [0,\epsilon]\}.
$$
Define the test-function $f:D_{\epsilon}\to\reals$ as follows.
$$
f=\threesystem{1\quad\text{on the complement of $[-2,2]\times[0,\epsilon]$}}
{x-1\quad\text{on $[1,2]\times [0,\epsilon]$}}
{-x-1\quad\text{on $[-2,-1]\times [0,\epsilon]$}}
$$
One checks easily that, for all $\epsilon$:
$$
\int_{D_{\epsilon}}\vert \nabla f\vert^2=2\epsilon, \quad \int_{D_{\epsilon}}f^2\geq {\rm const}>0
$$
Then \eqref{enough} follows immediately by observing that the Rayleigh quotient of $f$ tends to $0$ as $\epsilon\to 0$.

\smallskip
			
\end{ex}

\begin{ex} \label{example2} \rm The following example  shows that we need to impose convexity in Theorem \ref{main2}. It is easy and classical fact to deform locally a domain in order to create a small eigenvalue for the Neumann problem as indicated in the figure. In our situation, up to a Gauge transformation, we can suppose that the potential $A$ is locally $0$ and we have to estimate the first eigenvalue of the Laplacian with Dirichlet boundary condition at the basis of the deformation and Neumann boundary condition on the remaining part of the mushroom, as explained in Proposition \ref{upperharmonic}.

\begin{figure}[h]\centering
\includegraphics[width=40mm]{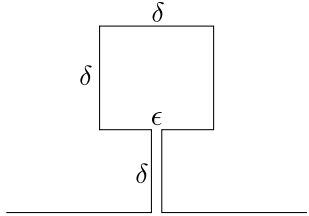}
\caption{A local deformation implying $\lambda_1 \to 0$}
\end{figure}

The only point is to take the value of the parameter $\epsilon$  much smaller than $\delta$ as $\delta \to 0$. Take $\epsilon =\delta^4$ and consider a function $u$ taking value $1$ in the square of size $\delta$ and passing linearly from $1$ to $0$ outside the rectangle of size $\epsilon,\delta$. The norm of the gradient of $u$ is $0$ on the square of size $\delta$ and $\frac{1}{\delta}$ in the rectangle of size $\delta,\epsilon$.

Then the Rayleigh quotient is
$$
R(u) \le \frac{\frac{1}{\delta^2}\delta \epsilon}{\delta^2}=\frac{\epsilon}{\delta^3} 
$$
which tends to $0$ as $\delta \to 0$.

\medskip
Moreover, we can do such local deformation keeping the curvature of the boundary bounded (see Example 2 in [CGI]).

\end{ex}



\section{Appendix}

\subsection{Spectrum of the circle} \label{riemannian circle}

We prove Theorem \ref{circle}.

Let then $(M,g)$ be the circle of length $L$ with metric $g=\theta(t)^2dt^2$, where $t\in [0,L]$ and $\theta(t)$ is periodic of period $L$. Given the $1$-form $A=H(t)dt$ we first want to find the harmonic $1$-form $\omega$ which is cohomologous to $A$; that is, we look for  a smooth function $\phi$ so that 
$
\omega=A+d\phi
$
is harmonic. Now a unit tangent vector field to the circle is
$$
 e_1=\dfrac{1}{\theta} \dfrac{d}{dt}.
$$ 
Write  $\omega=G(t)\,dt$. Then
$$
\delta\omega=-\dfrac 1{\theta}\Big(\dfrac{G}{\theta}\Big)'.
$$
As any $1$-form on the circle is closed, we see that $\omega$ is harmonic iff $G(t)=c\theta(t)$ for a constant $c$.
We look for  $\phi$ so that
$$
\phi'=-H+c\theta.
$$
Let us compute $c$. As $\phi$ must be periodic of period $L$, we must have $\int_0^L\phi'=0$. As the volume of $M$ is $L$, we also have $\int_0^L\theta=L$. Hence 
$$
c=\dfrac{1}{L}\int_0^LH(t)\,dt.
$$
On the other hand,  as the curve $\gamma(t)=t$ parametrizes $M$ with velocity $\frac{d}{dt}$,  one sees that the flux of $A$ across $M$ is given by
$$
\Phi^A=\dfrac{1}{2\pi}\int_0^LH
$$
Therefore
$
c=\frac{2\pi}{L} \Phi^A
$
and a primitive could be
$$
\phi(t)=-\int_0^tH+c\int_0^t\theta.
$$
Conclusion:

\nero {\it The form $A=H(t)dt$ is cohomologous to the harmonic form $\omega=c\theta\, dt$ with $c=\frac{2\pi}{L} \Phi^A$}.

\medskip

By Gauge invariance, we just need to compute the eigenvalues when the potential is $\omega$. In that case
$$
\Delta_{\omega}=-\nabla^{\omega}_{e_1}\nabla^{\omega}_{e_1}.
$$
Now
$$
\nabla^{\omega}_{e_1}u=\dfrac{u'}{\theta}-icu
$$
hence
$$
\nabla^{\omega}_{e_1}\nabla^{\omega}_{e_1}u=\dfrac{1}{\theta}\Big(\dfrac{u'}{\theta}-icu\Big)'-ic\Big(\dfrac{u'}{\theta}-icu\Big).
$$
After some calculation, the eigenfunction equation $\Delta_{\omega}u=\lambda u$ takes the form:
$$
-u''+\dfrac{\theta'}{\theta}u'+2ic \theta u'+c^2\theta^2 u=\lambda\theta^2 u.
$$
Set:
$$
u(t)=v(s(t)), \quad\text{that is}\quad v=u\circ s^{-1}.
$$
Then:
$$
\twosystem
{u'=v'(s)\theta}
{u''=v''(s)\theta^2+v'(s)\theta'}
$$
and the equation becomes:
$$
-v''+2ic v'+c^2v=\lambda v
$$
with solutions :
$$
v_k(s)=e^{\frac{2\pi i k}{L}s}, \quad \lambda=\dfrac{4\pi^2}{L^2}(k-\Phi^A)^2, \quad k\in\bf Z.
$$
Now Gauge invariance says that 
$$
\Delta_{A+d\phi}=e^{i\phi}\Delta_Ae^{-i\phi};
$$
and  $v_k$ is an eigenfunction of $\Delta_{A+d\phi}$ iff $e^{-i\phi}v_k$ is an eigenfunction of $\Delta_A$. Hence, the eigenfunctions of $\Delta_A$ (where $A=H(t)\,dt$) are
$$
u_k=e^{-i\phi}v_k,
$$
where $\phi(t)=-\int_0^tH+c\, s(t)$ and $c=\frac{2\pi}{L}\Phi^A$. Explicitly:
\begin{equation}\label{eigenfunctions}
u_k(t)=e^{i\int_0^tH}e^{\frac{2\pi i(k-\Phi^A)s(t)}{L}}
\end{equation}
as asserted. 

\smallskip

Let us know verify the last statement. If the metric is $g=dt^2$ then $\theta(t)=1$ and $s(t)=t$. If $A$ is a harmonic $1$-form then it has the expression $A=\frac{2\pi \Phi^A}{L}dt$. Taking into account \eqref{eigenfunctions} we see that $u_k(t)=e^{\frac{2\pi i k}{L}t}$.

\subsection{Spectrum of the flat rectangular torus}

\begin{prop}\label{torus} Fix positive numbers $p_1,\dots,p_n$ and let $M^n=\sphere 1(\frac{p_1}{2\pi})\times \cdots\times\sphere 1(\frac{p_n}{2\pi})$ be endowed with the product (flat) Riemannian structure. Let $A$ be a closed $1-$form on $M$. For each $\vec k=(k_1,\dots,k_n)\in {\bf Z}^n$ let
$$
\lambda_{\vec k}=\sum_{j=1}^n\dfrac{4\pi^2}{p_j^2}(k_j-\Phi^A_j)^2,
$$
where $\Phi^A_j$ is the flux of $A$ across the $j$-th factor
$\sphere 1(\frac{p_j}{2\pi})$.
Then the spectrum of the magnetic Laplacian with potential $A$ is given by $\{\lambda_{\vec k}:\vec k\in{\bf Z}^n\}$. In particular,
$$
\lambda_1(M,A)=\sum_{j=1}^n\dfrac{4\pi^2}{p_j^2}d(\Phi^A_j,{\bf Z})^2.
$$
\end{prop}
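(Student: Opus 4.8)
The plan is to reduce to a harmonic potential via gauge invariance and then diagonalize by separation of variables, exploiting the product structure of $M$. Throughout, $t_j$ denotes the arc-length coordinate on the $j$-th factor $\sphere 1(\frac{p_j}{2\pi})$, so that $t_j$ ranges over $[0,p_j]$ and $e_j=\partial/\partial t_j$ is a global parallel unit vector field.

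First I would invoke the Hodge decomposition (Proposition \ref{basic facts}): the closed form $A$ is cohomologous to a harmonic form $\omega\in{\rm Har}_1(M)$. Since $M$ is flat, ${\rm Har}_1(M)$ is spanned by the parallel forms $dt_1,\dots,dt_n$, so $\omega=\sum_{j=1}^n a_j\,dt_j$ with \emph{constant} coefficients. Computing the flux across the $j$-th factor gives $\Phi^A_j=\frac{1}{2\pi}\oint a_j\,dt_j=\frac{a_jp_j}{2\pi}$, hence $a_j=\frac{2\pi\Phi^A_j}{p_j}$. By Assertion (1) of Proposition \ref{basic facts}, $\Delta_A$ and $\Delta_\omega$ have the same spectrum, so it suffices to compute the spectrum of $\Delta_\omega$.

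Next I would write $\Delta_\omega=\sum_{j=1}^n(\nabla^\omega_{e_j})^\star\nabla^\omega_{e_j}=-\sum_{j=1}^n(\partial_{t_j}-ia_j)^2$ and, for $\vec k=(k_1,\dots,k_n)\in{\bf Z}^n$, set $u_{\vec k}=\prod_{j=1}^n e^{2\pi i k_jt_j/p_j}$, which is smooth on $M$ because the exponent is $p_j$-periodic in $t_j$. A one-line computation gives $(\partial_{t_j}-ia_j)^2u_{\vec k}=-\big(\frac{2\pi k_j}{p_j}-a_j\big)^2u_{\vec k}$, so that
$$
\Delta_\omega u_{\vec k}=\sum_{j=1}^n\Big(\frac{2\pi k_j}{p_j}-a_j\Big)^2u_{\vec k}=\Big(\sum_{j=1}^n\frac{4\pi^2}{p_j^2}(k_j-\Phi^A_j)^2\Big)u_{\vec k}=\lambda_{\vec k}u_{\vec k}.
$$
Equivalently, one may observe that $\omega$ is a product potential, so $\Delta_\omega$ acts on $L^2(M)=\bigotimes_{j=1}^n L^2(\sphere 1(\frac{p_j}{2\pi}))$ as the sum of the commuting one-dimensional magnetic Laplacians of Proposition \ref{circle}, and read off the eigenvalues $\frac{4\pi^2}{p_j^2}(k_j-\Phi^A_j)^2$ and eigenfunctions $e^{2\pi ik_jt_j/p_j}$ directly from that proposition (in the case of the canonical metric and harmonic potential on each factor).

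To finish, I would note that $\{u_{\vec k}:\vec k\in{\bf Z}^n\}$ is, up to normalizing constants, the standard Fourier basis of $L^2(M)$; being an orthonormal basis of eigenfunctions of the self-adjoint operator $\Delta_\omega$, it exhausts the spectrum. Hence the spectrum of $\Delta_A$ is exactly $\{\lambda_{\vec k}:\vec k\in{\bf Z}^n\}$. Minimizing $\lambda_{\vec k}$ over $\vec k\in{\bf Z}^n$ separates coordinatewise, giving
$$
\lambda_1(M,A)=\min_{\vec k\in{\bf Z}^n}\lambda_{\vec k}=\sum_{j=1}^n\frac{4\pi^2}{p_j^2}\min_{k_j\in{\bf Z}}(k_j-\Phi^A_j)^2=\sum_{j=1}^n\frac{4\pi^2}{p_j^2}d(\Phi^A_j,{\bf Z})^2,
$$
as claimed. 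No step is a genuine obstacle; the only point requiring care is Step 1, where flatness of the torus is exactly what forces the harmonic representative to have constant coefficients $a_j=2\pi\Phi^A_j/p_j$, after which the computation is routine separation of variables plus completeness of the Fourier basis.
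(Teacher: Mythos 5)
Your proof is correct and takes essentially the same route as the paper: reduce to a harmonic potential by gauge invariance, identify the coefficients via the flux, test against the Fourier basis $u_{\vec k}=\prod_j e^{2\pi ik_jt_j/p_j}$, and invoke completeness of that basis to exhaust the spectrum. The only difference is cosmetic: you spell out the Hodge-decomposition step and the flatness argument that forces constant coefficients, whereas the paper simply asserts "by Gauge invariance we can assume $A$ is harmonic" and writes $A=\sum_j\frac{2\pi\Phi^A_j}{p_j}\,dt_j$.
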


\begin{proof}
By Gauge invariance we can assume that $A$ is harmonic. Writing the metric in the canonical way, $g=\sum_{j=1}^ndt_j^2$,  and 
proceeding as in the case of the circle, we see that 
$$
A=A_1dt_1+\dots+A_ndt_n
$$
where $A_j=\frac{2\pi\Phi^A_j}{p_j}$.
 Let $(e_1,\dots,e_n)$ be the standard orthonormal basis 
$\Big(\opd{t_1},\dots,\opd{t_n}\Big)$. Then:
$$
\nabla^A_{e_j}u=\derive u{t_j}-iA_ju.
$$
Let $\vec k=(k_1,\dots,k_n)\in {\bf Z}^n$ and consider the function $u_{\vec k}: M\to\reals$ given by:
$$
u_{\vec k}(t_1,\dots,t_n)=e^{\frac{2\pi ik_1t_1}{p_1}}\cdots e^{\frac{2\pi ik_nt_n}{p_n}}
$$
Then for each $j$:
$$
\nabla^A_{e_j}u_{\vec k}=\dfrac{2\pi i}{p_j}(k_j-\Phi^A_j)u_{\vec k},
$$
from which we see that $u_{\vec k}$ is an eigenfunction of $\Delta_A$ associated to the eigenvalue
$$
\lambda_{\vec k}=\sum_{j=1}^n\dfrac{4\pi^2}{p_j^2}(k_j-\Phi^A_j)^2.
$$
As the collection of all $u_{\vec k}$, where $\vec k\in {\bf Z}^n$, is a complete orthonormal system in $L^2(M)$,  we see that the spectrum of $\Delta_A$ is 
$\{\lambda_{\vec k}:\vec k\in{\bf Z}^n$\}, as asserted. 
\end{proof}

\subsection{Appendix: proof of Lemma \ref{calculus}} \label{technical lemma} For simplicity of notation, we give the proof when $a=L=1$. This will not affect generality. Then, assume that $s : [0,1]\times [0,1]\to\reals$ is smooth, non-negative and satisfies 
$$
s(0,t)=t,\quad s(r,0)=0,\quad s(r,1)=1\quad\text{and}\quad \derive st(r,t)\doteq \theta(r,t)>0.
$$
Assume the identity
\begin{equation}\label{identity}
F(t)=p(r)\cos(\pi s(r,t))+q(r)\sin(\pi s(r,t))
\end{equation}
for real-valued functions $F(t),p(r),q(r)$, such that $p(r)^2+q(r)^2$ is not identically zero.  Then we must show:
\begin{equation}\label{sr}
\derive sr=0
\end{equation}
everywhere. 
\medskip

Differentiate \eqref{identity} with respect to $t$ and get:
\begin{equation}\label{fprime}
F'(t)=-\pi p(r)\theta(r,t)\sin(\pi s)+\pi q(r)\theta(r,t)\cos(\pi s)
\end{equation}
and we have the following matrix identity
$$
\twomatrix{\cos(\pi s)}{\sin(\pi s)}{-\pi\theta\sin(\pi s)}{\pi\theta\cos(\pi s)}\Due{p}{q}=\Due{F}{F'}.
$$
We then see:
$$
p(r)=F(t)\cos(\pi s)-\dfrac{F'(t)}{\pi\theta}\sin(\pi s).
$$
Set $t=0$ so that $s=0$ and $p(r)=F(0)\doteq p$ is constant; the previous identity becomes 
\begin{equation}\label{p}
p=F(t)\cos(\pi s)-\dfrac{F'(t)}{\pi\theta}\sin(\pi s).
\end{equation}
Observe that:
\begin{equation}\label{changes}
\twosystem
{F'(0)=\pi q(r)\theta(r,0)}
{F'(1)=-\pi q(r)\theta(r,1)}
\end{equation}
\nero Assume $F'(0)=0$. Then, as $\theta(t,r)$ is positive one must have $q(r)=0$ for all $r$, hence $p\ne 0$ and 
$
F(t)=p\cos(\pi s),
$
from which, differentiating with respect to $r$, one gets easily
$
\derive{s}{r}=0
$
and we are finished. 

\nero We now assume that $F'(0)\ne 0$: then we see from \eqref{changes} that $q$ is not identically zero and the smooth function $F':[0,1]\to\reals$ changes sign. This implies that

\nero {\it  there exists $t_0\in (0,1)$ such that $F'(t_0)=0$.}

\smallskip

Now \eqref{p} evaluated at $t=t_0$ gives:
$$
p=F(t_0)\cos(\pi s(r,t_0))
$$
for all $r$. Differentiate w.r.t. $r$ and get, for all $r\in [0,1]$:
$$
0=\sin(\pi s(r,t_0))\derive{s}{r}(r,t_0).
$$ 
Since $s(r,t)$ is increasing in $t$, we have 
$$
0<s(r,t_0)<s(r,1)=1.
$$
Hence $\sin(\pi s(r,t_0))>0$ and we get
$$
\derive{s}{r}(r,t_0)=0.
$$
 \eqref{identity} writes:
$$
F(t)=p\cos(\pi s)+q(r)\sin(\pi s),
$$
and then, differentiating w.r.t. $r$:
$$
0=-p\pi \sin(\pi s)\derive sr+q'(r)\sin(\pi s)+\pi q(r)\cos(\pi s)\derive sr.
$$
Evaluating at $t=t_0$ we obtain
$
0=q'(r)\sin(\pi s(r,t_0))
$
which implies
$$
q'(r)=0
$$
hence $q(r)=q$, a constant. We conclude that
$$
F(t)=p\cos(\pi s)+q\sin(\pi s)
$$
for constants $p,q$.  We differentiate the above w.r.to $r$ and get:
$$
0=\Big(-\pi p \sin(\pi s)+\pi q\cos(\pi s)\Big)\derive sr
$$
for all $(r,t)\in [0,1]\times [0,1]$. Now,  the expression inside parenthesis is non-zero a.e. on the square. Then one must have $\derive sr=0$ everywhere and the final assertion follows.

\bigskip

\normalsize 
\noindent Bruno Colbois

\noindent Universit\'e de Neuch\^atel, Institut de Math\'ematiques \\
Rue Emile Argand 11\\
 CH-2000, Neuch\^atel, Suisse

\noindent bruno.colbois@unine.ch

\bigskip

\noindent Alessandro Savo

\noindent Dipartimento SBAI, Sezione di Matematica \\
Sapienza Universit\`a di Roma,  
Via Antonio Scarpa 16\\
00161 Roma, Italy

\noindent alessandro.savo@sbai.uniroma1.it

\end{document}